\DeclarePairedDelimiterX\braket[2]{\langle}{\rangle}{#1 \delimsize\vert #2}
\newcommand{\N}{\mathbb{N}}
\newcommand{\R}{\mathbb{R}}
\newcommand{\C}{\mathbb{C}}
\newcommand{\HH}{\mathsf{H}}
\newcommand{\RH}{\mathsf{H}_{\mathbb{R}}}
\newcommand{\KK}{\mathsf{K}}
\newcommand{\CCH}{\overline{\mathsf{H}}}
\newcommand{\CCK}{\overline{\mathsf{K}}}
\newcommand{\op}[1]{\operatorname{#1}}
\newtheorem{thm}{Theorem}[section]
\newtheorem{cor}[thm]{Corollary}
\newtheorem{lem}[thm]{Lemma}
\newtheorem{prop}[thm]{Proposition}
\theoremstyle{definition}
\newtheorem{defn}[thm]{Definition}
\newtheorem{rem}[thm]{Remark}
\numberwithin{equation}{section}
\begin{document}
\title[CMAP for $q$-Gaussian algebras]{A simple proof of the complete metric approximation property for $q$-Gaussian algebras}
\author{Mateusz Wasilewski}
\date{}
\begin{abstract}
The aim of this note is to give a simpler proof of a result of Avsec, which states that $q$-Gaussian algebras have the complete metric approximation property.
\end{abstract}
\maketitle
\section{Introduction}
Bo\.{z}ejko and Speicher introduced $q$-Gaussian algebras in \cite{MR1105428} (see also \cite{MR1463036}). These von Neumann algebras are built from operators satisfying a deformation of canonical commutation relations. But they can also be viewed as $q$-deformations of the free group factors. It turns out that $q$-Gaussian algebras share many properties with the free group factors: they are factors (see \cite{MR2164947}), they are non-injective (see \cite{MR2091676}), they have the Haagerup approximation property (folklore; see \cite{MR3717957} for a proof in a more general setting of $q$-Araki-Woods algebras), etc. One of more important properties of von Neumann algebras studied recently is the notion of strong solidity, first introduced by Ozawa and Popa in \cite{MR2680430}; in the same paper they prove that free group factors are strongly solid. 

In an unpublished manuscript \cite{1110.4918} Avsec proved that $q$-Gaussian algebras possess the complete metric approximation property (see Theorem A therein). When combined with deformation/rigidity techniques, namely using a malleable deformation, it allowed him to also prove that $q$-Gaussian algebras are strongly solid (see Theorem B therein). We will reprove the first result, namely we will show the following.
\begin{thm}\label{Thm=Polybound}
Let $\HH_{\R}$ be a real Hilbert space and let $\Gamma_q(\HH_{\R})$ be the associated $q$-Gaussian algebra. Let $P_n\colon \Gamma_q(\HH_{\R}) \to \Gamma_q(\HH_{\R})$ be the projection onto Wick words of length $n$. Then $\|P_n\|_{\op{cb}}\leqslant C(q)n$, where $C(q)$ is a positive constant depending only on $q$.
\end{thm}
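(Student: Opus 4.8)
The plan is to work on the $q$-Fock space and to reduce the theorem to a bound, uniform in $n$, on the $n+1$ ``elementary'' pieces into which $P_n$ decomposes according to the number of creation operators involved.

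First I would pass to the $q$-Fock space $\mathcal{F}_q(\HH_\C)$ over the complexification $\HH_\C$ of $\HH_\R$, with vacuum vector $\Omega$. Since $\Omega$ is cyclic, $x\mapsto x\Omega$ identifies $\Gamma_q(\HH_\R)$ with a dense subspace of $\mathcal{F}_q(\HH_\C)$ and intertwines $P_n$ with the orthogonal projection onto the $n$-particle space $\HH_\C^{\otimes n}$; equivalently, $P_n(x)$ is the unique Wick word whose action on $\Omega$ returns the $n$-particle component of $x\Omega$. I would then invoke the Wick formula, which expresses a length-$n$ Wick word as $W(\xi_1\otimes\cdots\otimes\xi_n)=\sum_{k=0}^{n}W_{n,k}(\xi_1\otimes\cdots\otimes\xi_n)$, where $W_{n,k}$ collects the normally ordered monomials containing exactly $k$ creation operators $\ell(\,\cdot\,)$ and $n-k$ annihilation operators $\ell(\,\cdot\,)^{*}$, each weighted by a power of $q$ recording a crossing number. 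This yields a decomposition $P_n=\sum_{k=0}^{n}P_{n,k}$ into bounded maps on $\Gamma_q(\HH_\R)$, so by the triangle inequality it suffices to prove
\[
\|P_{n,k}\|_{\op{cb}}\leqslant C_0(q)\qquad(0\leqslant k\leqslant n),
\]
with $C_0(q)$ independent of $n$ and $k$; then $\|P_n\|_{\op{cb}}\leqslant(n+1)\,C_0(q)$, which is the assertion for $n\geqslant1$, while $P_0$ is the trace and hence completely positive.

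To estimate $P_{n,k}$, I would follow Buchholz's treatment of the case $q=0$ (the free group factors), which is classical. On the Fock space $P_{n,k}$ has ``bidegree'' $(k,n-k)$, and the idea is to realise it, via the inclusion $\Gamma_q(\HH_\R)\subseteq B(\mathcal{F}_q(\HH_\C))$, as left multiplication by a ``$k$-fold creation'' operator composed with right multiplication by an ``$(n-k)$-fold annihilation'' operator, sandwiched around a trace-preserving conditional expectation onto a copy of $\Gamma_q(\HH_\R)$ inside a larger $q$-Gaussian algebra (obtained by adjoining an auxiliary real Hilbert space). When $q=0$ the creation operators have norm $1$, and this realisation gives $\|P_{n,k}\|_{\op{cb}}\leqslant1$ directly, hence $\|P_n\|_{\op{cb}}\leqslant n+1$.

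The hard part will be the following. For $0<q<1$ the $q$-creation operator has norm $(1-q)^{-1/2}>1$, so estimating $P_{n,k}$ by crudely bounding a product of $n$ creation and annihilation operators degenerates like $(1-q)^{-n/2}$ --- exponential in $n$, hence useless. (The case $-1<q<0$ is milder, but handled by the same device.) One must instead exploit the cancellations already encoded in the Wick formula. Concretely, I would compare the $q$-deformed column and row operator spaces produced by the iterated $q$-creation (resp.\ annihilation) operators with their full Fock analogues: this comparison is governed by the ``$q$-symmetrisation'' operators on $\HH_\C^{\otimes m}$ that express the $q$-deformed inner product in terms of the full Fock one, and the essential classical input is that, by a result of Bo\.{z}ejko--Speicher and Zagier, these operators are bounded below by $\prod_{j\geqslant1}(1-|q|^{j})\cdot\id$, uniformly in $m$. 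The delicate point is to arrange the realisation of $P_{n,k}$ so that the (genuinely growing) norms of the positive powers of these operators only ever enter in cancelling pairs, leaving behind only the negative powers, whose norms are uniformly bounded; once that bookkeeping is done, the remaining estimate is a routine adaptation of the $q=0$ computation and gives $\|P_{n,k}\|_{\op{cb}}\leqslant C_0(q)$. I expect this bookkeeping --- pinning down exactly where the powers of $q$ land so that nothing exponential in $n$ survives --- to be the technical heart of the argument.
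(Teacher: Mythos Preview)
Your proposal correctly identifies the central obstacle---that a naive bound on each piece $P_{n,k}$ blows up like $(1-|q|)^{-n/2}$---but it does not supply the mechanism that overcomes it. Saying one should ``arrange the realisation of $P_{n,k}$ so that the positive powers of the $q$-symmetrisation operators enter only in cancelling pairs'' is a description of what one would like to happen, not a construction that makes it happen. In the free case the Buchholz realisation works because free creation operators are isometries; for $q\neq 0$ there is no evident way to manufacture the required cancellation just by comparing the $q$-Fock and full-Fock inner products and invoking the uniform lower bound on $P_q^{m}$: that bound controls $(P_q^{m})^{-1}$, while unwinding a product of $n$ $q$-creation and annihilation operators brings in $P_q^{m}$ itself, whose norm does grow. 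As written, the outline stops precisely where the difficulty begins.

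The paper (following Avsec, but with a much shorter combinatorial argument) does not attempt a Buchholz-style realisation. It first invokes Nou's non-commutative Khintchine inequality to identify the operator space $X_n$ of length-$n$ Wick words, up to a factor $C(q)(n+1)$, with $\bigoplus_{k=0}^{n}(\HH_q^{\otimes n-k})_{c}\otimes_{h}(\HH_q^{\otimes k})_{r}$, and then passes to the predual: the task becomes bounding, uniformly in $n$ and $k$, the cb norm of the map $\Phi_{n,k}\colon (\HH_q^{\otimes n})_{r}\otimes_{h}(\HH_q^{\otimes k})_{c}\to L^{1}$ that sends a tensor to its Wick word. The replacement for your missing bookkeeping is an explicit algebraic identity,
\[
\Phi_{n,k}=\sum_{j=0}^{\min(n,k)} (-1)^{j}\, q^{\binom{j}{2}}\, w^{j}_{n,k},
\]
where each $w^{j}_{n,k}$ is built from the $L^{2}$-multiplication map $v_{n-j,k-j}$ and the operators $R^{\ast}_{n-j,j}$, $R^{\ast}_{j,k-j}$, and has cb norm bounded by a fixed constant $D(q)$. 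The summability of $\sum_{j\geqslant 0} |q|^{\binom{j}{2}}$ then gives the uniform bound. Thus the cancellation is achieved not at the level of creation/annihilation operator norms inside a larger algebra, but by an explicit inversion of the Wick-product formula with rapidly decaying coefficients; nothing in your outline points toward this identity or a substitute for it.
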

\begin{cor}\label{Cor=CMAP}
For any real Hilbert space $\HH_{\R}$ the $q$-Gaussian algebra $\Gamma_q(\HH_{\R})$ has the $w^{\ast}$-complete metric approximation property.
\end{cor}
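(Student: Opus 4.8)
The corollary is the routine consequence of Theorem~\ref{Thm=Polybound}: for $0<t<1$ the radial multiplier $U_t:=\sum_{n\geq 0}t^{n}P_n=\Gamma_q(t\cdot\id_{\HH_{\R}})$ is the $q$-second quantisation of the contraction $t\cdot\id_{\HH_{\R}}$, hence unital, completely positive and normal, so $\|U_t\|_{\op{cb}}=1$. The truncations $\Phi_{t,N}:=U_t\circ\bigl(\sum_{n=0}^{N}P_n\bigr)=\sum_{n=0}^{N}t^{n}P_n$ are normal and finite rank, and by Theorem~\ref{Thm=Polybound}
\[
\|\Phi_{t,N}-U_t\|_{\op{cb}}\leq\sum_{n>N}t^{n}\|P_n\|_{\op{cb}}\leq C(q)\sum_{n>N}n t^{n}\longrightarrow 0\quad(N\to\infty).
\]
Thus for each $t$ one can pick $N(t)$ with $\|\Phi_{t,N(t)}\|_{\op{cb}}\leq 1+C(q)(1-t)$; letting $t\to 1$ gives normal finite-rank maps tending to $\id$ in the point-$\sigma$-weak topology with completely bounded norms tending to $1$, which is exactly the $w^{\ast}$-complete metric approximation property. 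So everything reduces to Theorem~\ref{Thm=Polybound}, which I would prove as follows.

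Identify $\Gamma_q(\HH_{\R})$ with the $q$-Fock space $\mathcal F_q(\HH_{\C})$ via $x\mapsto x\Omega$; then $P_n$ becomes the projection onto $\HH_{\C}^{\otimes n}$ and the Wick map $W_n\colon\HH_{\C}^{\otimes n}\to\Gamma_q(\HH_{\R})$ is determined by $W_n(\xi)\Omega=\xi$. The $q$-Wick formula expands $W_n$ into its normally ordered pieces, $W_n=\sum_{k=0}^{n}W_{k,\,n-k}$, where $W_{k,m}(\xi)$ collects the $q$-weighted monomials with $k$ creation and $m$ annihilation operators. Hence $P_n$ splits into $n+1$ maps,
\[
P_n=\sum_{k=0}^{n}\Pi_{n,k},\qquad \Pi_{n,k}(x):=W_{k,\,n-k}\bigl(P_n(x\Omega)\bigr)
\]
(writing $P_n$ also for the corresponding projection on $\mathcal F_q(\HH_\C)$), and the theorem follows immediately from a uniform bound $\|\Pi_{n,k}\|_{\op{cb}}\leq C_0(q)$, valid for all $0\leq k\leq n$: summing over $k$ then gives $\|P_n\|_{\op{cb}}\leq(n+1)C_0(q)$.

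To get the uniform bound I would factor $\Pi_{n,k}$ through the $q$-creation operators. Viewing $\xi\in\HH_{\C}^{\otimes n}=\HH_{\C}^{\otimes k}\otimes\HH_{\C}^{\otimes m}$ ($m=n-k$) as a Hilbert--Schmidt operator and writing $\ell^{(j)}$ for the $j$-fold block creation on $\mathcal F_q(\HH_\C)$, there is a factorisation of the shape
\[
W_{k,m}(\xi)=\ell^{(k)}\,\bigl(\xi\otimes\id_{\mathcal F_q(\HH_\C)}\bigr)\,\bigl(\ell^{(m)}\bigr)^{\!*}\qquad\text{(up to the bounded }q\text{-symmetrisation operators)},
\]
whence $\|W_{k,m}(\xi)\|\leq\|\ell^{(k)}\|\,\|\xi\|_{q}\,\|\ell^{(m)}\|$. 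The Bo\.zejko--Speicher lower bound $P_q^{(j)}\geq\prod_{i\geq 1}(1-|q|^{i})>0$, uniform in the number of tensor legs, forces $\|\ell^{(j)}\|\leq c(q)$ independently of $j$; this already yields $\|\Pi_{n,k}\|\leq c(q)^{2}$. To upgrade this to a completely bounded estimate one runs the same computation with matrix coefficients, using the column operator-space structure on the creation side and the row structure on the annihilation side (that is, splitting via the Haagerup tensor product), exactly as in the operator-space proof of the Haagerup inequality on free Fock spaces; and one checks that $x\mapsto P_n(x\Omega)$ is a complete contraction into the relevant Hilbert--Schmidt operator space.

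The step I expect to be the real obstacle is precisely this last one: making the block-creation factorisation of $W_{k,m}$ precise in the $q$-deformed setting and turning the scalar estimate into a genuinely completely bounded one \emph{with a constant independent of $n$ and of the split $n=k+m$}. This is where the $q$-combinatorics — the structure of the symmetrisation operators $P_q^{(n)}$ and of the $q$-Wick monomials — has to be controlled uniformly; the $q$-dependence of $C_0(q)$ will come entirely from $\bigl(\prod_{i\geq1}(1-|q|^{i})\bigr)^{-1}$-type factors. Everything else — the decomposition $P_n=\sum_k\Pi_{n,k}$, the summation over $k$, and the deduction of the corollary — is soft.
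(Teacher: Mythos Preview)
Your deduction of the corollary from Theorem~\ref{Thm=Polybound} is correct and is exactly Haagerup's argument, as in the paper: second-quantise a scalar contraction, truncate, control the tail by the polynomial cb bound, and renormalise. (Minor point: ``finite rank'' needs $\dim\HH_{\R}<\infty$; the paper reduces to this case by a standard approximation.)

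The gap is in your sketch of Theorem~\ref{Thm=Polybound}. The decomposition $P_n=\sum_{k}\Pi_{n,k}$ via the Wick formula is a good start, and the factorisation
\[
W_{k,m}(\xi)\;=\;\ell^{(k)}\,(\xi\otimes\id)\,(\ell^{(m)})^{\ast}\qquad(\text{up to }R_{k,m}^{\ast})
\]
is correct. The problem is the sentence ``one checks that $x\mapsto P_n(x\Omega)$ is a complete contraction into the relevant Hilbert--Schmidt operator space.'' For the sandwich by $\ell^{(k)}$ and $(\ell^{(m)})^{\ast}$ to be completely bounded with constant independent of $k,m$, the intermediate space must carry the \emph{operator norm} structure $(\HH_q^{\otimes k})_c\otimes_h(\HH_q^{\otimes m})_r$. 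But $x\mapsto P_n(x\Omega)$ is only a complete contraction into the \emph{column} (or row) Hilbert space $(\HH_q^{\otimes n})_c$; showing it is completely bounded into $(\HH_q^{\otimes k})_c\otimes_h(\HH_q^{\otimes m})_r$ is, via Nou's lower Khintchine bound, equivalent to showing $P_n\colon\Gamma_q(\HH_{\R})\to X_n$ is completely bounded --- which is exactly the goal. Your scalar estimate $\|W_{k,m}(\xi)\|\leq\|\ell^{(k)}\|\,\|\xi\|_q\,\|\ell^{(m)}\|$ works because it uses the Hilbert--Schmidt norm $\|\xi\|_q\leq\|x\|$, but that norm corresponds to an operator space structure in which the sandwich is \emph{not} uniformly cb. So ``running the same computation with matrix coefficients'' is circular, not merely technical.

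The paper escapes this by dualising: rather than bounding $\Pi_{n,k}\colon\Gamma_q\to B(\mathcal F_q)$, it bounds the predual map $\Phi_{n,k}\colon(\HH_q^{\otimes n})_r\otimes_h(\HH_q^{\otimes k})_c\to L^1(\Gamma_q^{\op{op}})$, and the crucial new ingredient is a combinatorial identity (Proposition~\ref{Prop=Combinatorial}) expressing $\Phi_{n,k}$ as $\sum_j(-1)^jq^{\binom{j}{2}}w^j_{n,k}$, where each $w^j_{n,k}$ factors through the completely contractive multiplication $L^2\times L^2\to L^1$. That identity, proved by manipulating the operators $R^{\ast}_{a,b,c}$, is the actual content of the theorem and has no analogue in your outline.
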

The proof splits into two parts -- in the first one we analyse the operator space structure induced on the image of $P_n$ by inclusion into $\Gamma_q(\RH)$ and define some completely bounded maps; we will follow closely the original approach of Avsec. The second part of the proof will consist of showing that a certain linear combination of these maps is equal to the map, whose complete boundedness we want to prove (cf. \cite[Proposition 3.18]{1110.4918}). Presenting a different (and much simpler) proof of this combinatorial statement is the main aim of this note. We discuss more details in the beginning of Subsection \ref{Subsec=Combinatorial}, once all the necessary tools have been introduced; readers familiar with Avsec's paper \cite{1110.4918} may wish to jump straight to this subsection.

The structure of the paper is the following: in Section \ref{Sec=Prelim} we provide necessary information about operator spaces and $q$-Gaussian, in Section \ref{Sec=Haagerup} we recall Haagerup's argument on how to deduce Corollary \ref{Cor=CMAP} from Theorem \ref{Thm=Polybound}, finally Section \ref{Sec=Mainproof} contains the proof of Theorem \ref{Thm=Polybound}.
\subsection*{Acknowledgements}
I would like to thank Adam Skalski for careful reading of the preliminary version of this note and useful remarks. I am also thankful to \'{E}ric Ricard for pointing out that the bound on the cb norm of $P_n$ is linear in $n$, not quadratic, as I wrote in an older version of the paper. This was also spotted by the referee, whose helpful comments greatly improved the exposition.

The author was supported in part by European Research Council Consolidator Grant 614195
RIGIDITY and by long term structural funding – Methusalem grant of the Flemish Government.
\section{Notation and preliminaries}\label{Sec=Prelim}
All inner products will be linear in the second variable. We will denote the set $\{1,\dots, n\}$ by $[n]$. For simplicity we assume that the Hilbert spaces are finite dimensional -- there is a standard approximation procedure that allows us to do it.
\subsection{Operator spaces}
An \textbf{operator space} is a Banach space $X$ equipped with a sequence of norms on the matrix spaces $\op{M}_n(X)$ that satisfy natural compatibility conditions, the so-called Ruan's axioms; any such sequence comes from an embedding $X\subset \op{B}(\HH)$. Any $C^{\ast}$-algebra $A$ admits a canonical operator space structure induced by any faithful representation on a Hilbert space. For information on operator spaces we refer to the monographs \cite{MR1793753} and \cite{MR2006539}. For a linear map $T\colon X\to Y$ between operator spaces we define its \textbf{cb} norm as
\[
\|T\|_{\op{cb}}:= \sup_{n\in \N} \|\op{Id}_n\otimes T\colon \op{M}_n(X) \to \op{M}_n(Y)\|.
\] 
We say that $T$ is \textbf{completely bounded} if $\|T\|_{cb}<\infty$.

The theory of operator spaces mimicks the theory of Banach spaces. In particular, it retains one of most powerful properties of Banach spaces: the duality. For any operator space $X$ there is a naturally defined operator space structure on its Banach space dual $X^{\ast}$. Another construction is the complex conjugate operator space, defined using the identification $\op{M}_n(\overline{X})\simeq \overline{\op{M}_n(X)}$. Since for Hilbert spaces we have an isometric identification $\HH^{\ast}\simeq \HH$, dual spaces and conjugate spaces often appear together. One important fact about conjugate spaces is that for a von Neumann algebra $\mathsf{M}$ we have $\overline{\mathsf{M}} \simeq \mathsf{M}^{\op{op}}$ completely isometrically, via the map $\overline{x}\mapsto x^{\ast}$, where $\mathsf{M}^{\op{op}}$ is the same as $\mathsf{M}$ as a vector space but the multiplication is reversed i.e. $x^{\op{op}}\diamond y^{\op{op}}:= yx$. We can define an operator space structure on the spaces $L^{1}(\mathsf{M})$ for a finite von Neumann algebra $\mathsf{M}$ (equipped with a trace $\tau$) and $S^1(\mathsf{H},\KK)$ (the trace class operators) and the dualities $(L^{1}(\mathsf{M}))^{\ast}\simeq \mathsf{M}$ and $\left(S^{1}(\HH,\KK)\right)^{\ast} \simeq\op{B}(\KK,\HH)$ hold; the pairings are given by $L^{1}(\mathsf{M}) \times \mathsf{M} \ni (x,y) \mapsto \tau(xy)$ and $S^{1}(\HH,\KK)\times \op{B}(\KK,\HH) \ni (S,T) \mapsto \op{Tr}(ST)$. Some authors prefer to include complex conjugations in these dualities, i.e. consider the pairing $\overline{L^{1}(\mathsf{M})} \times \mathsf{M} \ni (\overline{x},y)\mapsto \tau(x^{\ast}y)$ (and a similar one for the trace-class operators), so that they resemble the case of the Hilbert spaces, i.e. they are ``positive definite'', but we will try to avoid them to make the notation less cumbersome; both approaches have their pros and cons.

In this paper we will encounter two special operator space structures on a Hilbert space.
\begin{defn}
Let $\HH$ be a complex Hilbert space.
\begin{enumerate}[{\normalfont (i)}]
\item The \textbf{column Hilbert space} structure $\HH_{c}$ is given by the identification $\HH \simeq \op{B}(\C, \HH)$;
\item The \textbf{row Hilbert space} structure $\HH_{r}$ is given by the identification $\HH \simeq \op{B}(\overline{\HH}, \C)$.
\end{enumerate}
\end{defn}
\begin{rem}
In particular, we have $\HH_{c}^{\ast} \simeq \overline{\HH}_{r}$ and $\HH_r^{\ast}\simeq \overline{\HH}_{c}$.
\end{rem}
These Hilbert spaces are \textbf{homogeneous}, i.e. bounded maps on the underlying Hilbert spaces are automatically completely bounded, with cb norm equal to the norm (see \cite[Theorem 3.4.1 and Proposition 3.4.2]{MR1793753}).

We are going to need the notion of a tensor product of operator spaces. The simplest one is obtained by the following procedure: we have two operator spaces $X \subset \op{B}(\HH)$ and $Y \subset \op{B}(\KK)$ and we get an operator space structure on $X\otimes Y$ via embedding $X\otimes Y \subset \op{B}(\HH\otimes \KK)$. It turns out that it does not depend on the embeddings and the completion of $X\otimes Y$ is denoted by $X\otimes_{\op{min}} Y$; it coincides with the minimal tensor product of $C^{\ast}$-algebras, in case $X$ and $Y$ are $C^{\ast}$-algebras.

There is also a special tensor product of operator spaces, called the Haagerup tensor product, which does not have a counterpart for Banach spaces; the Haagerup tensor product of two operator spaces $X$ and $Y$ will be denoted by $X\otimes_{h} Y$. One of its key properties is self-duality, i.e. $(X\otimes_{h} Y)^{\ast} \simeq X^{\ast}\otimes_{h} Y^{\ast}$ for finite dimensional operator spaces $X$ and $Y$. For the definition and more information, see \cite[Section 9]{MR1793753}. We just collect here the properties that will be useful for us in the sequel.
\begin{prop}\label{Prop=Haageruptensor}[Proposition 9.3.4 and Proposition 9.3.5 in \cite{MR1793753}]
Let $\HH$ and $\KK$ be Hilbert spaces. Then we have the following identifications:
\begin{enumerate}[{\normalfont (i)}]
\item $\HH_{c} \otimes_{h} \CCK_{r} \simeq \op{K}(\KK, \HH)$ (the compact operators);
\item $\CCK_{r} \otimes_{h} \HH_{c} \simeq \HH_{r} \otimes_{h} \CCK_{c}  \simeq S^{1}(\KK, \HH)$ (the trace class operators);
\item $ \HH_r\otimes_h \KK_r \simeq (\HH\otimes \KK)_r$ and $\HH_c\otimes_h \KK_c \simeq (\HH\otimes \KK)_c$. 
\end{enumerate}
\end{prop}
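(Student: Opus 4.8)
Since the statement is quoted essentially verbatim from \cite[Section~9]{MR1793753}, the ``proof'' is really a reference; nevertheless here is the mechanism one would use. The common engine behind all three items is to realize the row and column structures as concrete operator spaces of maps to and from $\C$ --- $\HH_c\simeq\op{B}(\C,\HH)$ and $\KK_r\simeq\op{B}(\overline{\KK},\C)$, so that $\overline{\KK}_r\simeq\op{B}(\KK,\C)$ --- and then to observe that the bilinear map sending an elementary Haagerup tensor $x\otimes y$ to the operator composition $xy$ (after embedding the two factors as corners of a single $\op{B}(\HH\oplus\C\oplus\KK)$) is automatically a complete contraction, because the Haagerup norm is submultiplicative with respect to such products. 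The nontrivial point is that this composition map is in fact a complete isometry (or a complete quotient map), and this is extracted from a polar decomposition. The finite-dimensionality standing assumption of the paper is convenient here: it collapses compact into bounded, and it licenses the self-duality $(X\otimes_h Y)^{\ast}\simeq X^{\ast}\otimes_h Y^{\ast}$ used for items (ii) and (iii).

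For (i) the natural map $\HH_c\otimes_h\overline{\KK}_r\to\op{K}(\KK,\HH)=\op{B}(\KK,\HH)$ is $\xi\otimes\overline{\eta}\mapsto\xi\,\langle\eta,\,\cdot\,\rangle$, i.e.\ in coordinates the outer product of a column vector by a row vector. Under the standard identifications $\op{M}_{n,N}(\HH_c)\simeq\op{B}(\C^N,\HH\otimes\C^n)$ and $\op{M}_{N,n}(\overline{\KK}_r)\simeq\op{B}(\KK\otimes\C^n,\C^N)$, an element $u\in\op{M}_n(\op{B}(\KK,\HH))=\op{B}(\KK\otimes\C^n,\HH\otimes\C^n)$ has Haagerup norm equal to the infimum of $\|a\|\,\|b\|$ over all factorizations $u=a\circ b$ through some $\C^N$. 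Complete contractivity of the composition map gives one inequality; for the other, polar-decompose $u=w|u|$ with $|u|=(u^{\ast}u)^{1/2}$, identify $\C^N\simeq\KK\otimes\C^n$, and take $a=w|u|^{1/2}$, $b=|u|^{1/2}$, so that $u=a\circ b$ and $\|a\|\,\|b\|\leqslant\||u|^{1/2}\|^{2}=\|u\|$. Hence the map is a complete isometry. (In the general case one checks in addition that every compact operator is a norm limit of such products, so the closure of the range is all of $\op{K}(\KK,\HH)$.)

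Item (ii) is then pure duality: interchanging $\HH$ and $\KK$ in (i) gives $\KK_c\otimes_h\overline{\HH}_r\simeq\op{B}(\HH,\KK)$, and dualizing, using $(X\otimes_h Y)^{\ast}\simeq X^{\ast}\otimes_h Y^{\ast}$ together with the Remark's $\HH_c^{\ast}\simeq\overline{\HH}_r$ and $\overline{\HH}_r^{\ast}\simeq\HH_c$ and the operator space identification $\op{B}(\HH,\KK)^{\ast}\simeq S^1(\KK,\HH)$ via the trace pairing, yields $S^1(\KK,\HH)\simeq\overline{\KK}_r\otimes_h\HH_c$. The alternative description $\HH_r\otimes_h\overline{\KK}_c\simeq S^1(\KK,\HH)$ comes from the operator adjoint $S\mapsto S^{\ast}$, which is a conjugate-linear complete isometry $S^1(\KK,\HH)\to S^1(\HH,\KK)$ and hence a linear complete isometry onto $\overline{S^1(\HH,\KK)}\simeq\overline{\overline{\HH}_r\otimes_h\KK_c}\simeq\HH_r\otimes_h\overline{\KK}_c$, using that conjugation commutes with $\otimes_h$ and with the row/column functors. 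For (iii) one proves the column statement $\HH_c\otimes_h\KK_c\simeq(\HH\otimes\KK)_c$ by the same composition/polar-decomposition argument, now ampliating $\KK_c=\op{B}(\C,\KK)$ to $\op{B}(\HH,\HH\otimes\KK)$ via $\id_{\HH}\otimes(-)$ before composing with $\HH_c=\op{B}(\C,\HH)$; the row statement follows by dualizing, since $\HH_r\otimes_h\KK_r\simeq(\overline{\HH}_c\otimes_h\overline{\KK}_c)^{\ast}\simeq\bigl((\overline{\HH\otimes\KK})_c\bigr)^{\ast}\simeq(\HH\otimes\KK)_r$. The only genuine care needed anywhere is bookkeeping --- tracking the complex conjugations and remembering which Hilbert space is being ``contracted away'' in each composition; there is no analytic obstacle, which is exactly why this statement is quoted rather than proved.
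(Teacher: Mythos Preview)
The paper does not actually prove this proposition; it simply cites \cite[Propositions~9.3.4 and~9.3.5]{MR1793753} and moves on, so there is no ``paper's own proof'' to compare against. Your sketch is correct and is precisely the standard argument one finds in the cited reference: realize $\HH_c$ and $\overline{\KK}_r$ as corners of $\op{B}(\HH\oplus\C\oplus\KK)$, use that the Haagerup tensor product linearizes operator multiplication to get complete contractivity, and use polar decomposition for the reverse inequality; then deduce (ii) and the row half of (iii) by self-duality of $\otimes_h$ together with $\HH_c^{\ast}\simeq\overline{\HH}_r$. Nothing is missing.
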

\subsection{\texorpdfstring{$q$}{q}--Gaussian algebras}
We will now define $q$-Gaussian algebras, introduced by Bo\.{z}ejko and Speicher (see \cite{MR1105428}).

Let $\RH$ be a real Hilbert space and let $\HH$ be its complexification; we will denote the complex conjugation by $I$. We would like to define the \textbf{$q$-Fock space}, which will be a certain completion of the tensor algebra $\bigoplus_{n\geqslant 0} \HH^{\otimes n}$ (both the direct sum and the tensor products are algebraic here, also $\HH^{\otimes 0} = \C\Omega$). We will encounter tensors fairly often and for convenience we will sometimes denote $v_1\otimes\dots\otimes v_n$ by $\bm{v}$ and for subset $A\subset [n]$ we will use the notation $\bm{v}_{A}:= v_{i_{1}}\otimes \dots \otimes v_{i_k}$, where $A = \{i_1<\dots<i_{k}\}$, i.e. $i_1,\dots,i_k$ are all elements of $A$ arranged in an increasing order.
\begin{defn}
Let $\HH$ be a complex Hilbert space. Fix $q\in (-1,1)$. For each $n\in \N$ we define an operator $P_q^{n}\colon \HH^{\otimes n}\to \HH^{\otimes n}$ by 
\begin{equation}
P_q^{n}(v_1\otimes\dots\otimes v_n):= \sum_{\pi \in S_n} q^{i(\pi)} v_{\pi(1)}\otimes\dots\otimes v_{\pi(n)},
\end{equation}
where $i(\pi):=|\{(i,j)\colon i<j,{ }\pi(i)>\pi(j)\}|$ is the number of inversions of the permutation $\pi$. This operator is injective and positive definite (see \cite[Proposition 1]{MR1105428}) and therefore defines an inner product on $\HH^{\otimes n}$. The $P_q^{n}$'s combine to give an inner product on $ \bigoplus_{n\geqslant 0} \HH^{\otimes n}$ and the completion of this space with respect to this inner product is called the \textbf{$q$-Fock space} and is denoted by $\mathcal{F}_q(\HH)$.
\end{defn} 
In order to define $q$-Gaussian algebras, we have to present an important class of operators on the $q$-Fock space.
\begin{defn}
Let $\xi \in \HH$. We define the \textbf{creation operator} $a_q^{\ast}(\xi)\colon \mathcal{F}_q(\HH) \to \mathcal{F}_q(\HH)$ by the formula
\begin{align*}
a_q^{\ast}(\xi)(v_1\otimes\dots\otimes v_n) &:= \xi\otimes v_1\otimes \dots\otimes v_n \\
a_q^{\ast}(\xi)(\Omega) := \xi.
\end{align*}
We define also the \textbf{annihilation operators} by $a_q(\xi):= (a_q^{\ast}(\xi))^{\ast}$. Their action on simple tensors is given by
\begin{align*}
a_q(\xi)(\Omega)=0 \\
a_q(\xi)(v_1\otimes \dots\otimes v_n)&= \sum_{i=1}^{n} q^{i-1} \langle \xi, v_{i}\rangle v_1\otimes\dots\widehat{v_i}\dots\otimes v_n,
\end{align*}
where the hat over $v_{i}$ means that this vector is omitted.
\end{defn}
These operators extend to bounded operators on $\mathcal{F}_q(\HH)$ and satisfy the $q$-commutation relations $a_q(\xi) a_q^{\ast}(\eta) - q a_q^{\ast}(\eta)a_q(\xi) = \langle \xi, \eta \rangle \op{Id}$.
\begin{defn}
Let $\RH$ be a real Hilbert space with complexification $\HH$. We define the \textbf{$q$-Gaussian algebra} $\Gamma_q(\RH)$ to be the von Neumann subalgebra of $\op{B}(\mathcal{F}_q(\HH))$ generated by the set $\{a_q^{\ast}(\xi)+a_q(\xi)\colon \xi \in \RH\}$.
\end{defn}
The vector $\Omega$ is a cyclic and separating vector for $\Gamma_q(\RH)$, moreover the corresponding vector state is a faithful trace. In particular, the $L^{2}$-space $L^{2}(\Gamma_q(\RH))$ can be identified with the Fock space $\mathcal{F}_q(\HH)$. One can show that for any simple tensor $v_1\otimes \dots \otimes v_n \in \HH^{\otimes n}$ there exists a (unique!) operator $W(v_1\otimes \dots \otimes v_n)$ such that $W(v_1\otimes \dots \otimes v_n)\Omega = v_1\otimes \dots \otimes v_n$; these operators will be called the \textbf{Wick words}. Actually, there is an explicit formula for them (see \cite[Proposition 2.7]{MR1463036}).
\begin{lem}
We have 
\begin{equation}\label{Eqn=Wickformula}
W(v_1\otimes \dots\otimes v_n) = \sum_{A \subset [n]} q^{i(A)} a_q^{\ast}(\bm{v}_{A}) a_q(I\bm{v}_{[n]\setminus A}),
\end{equation}
where for $A=\{i_1<\dots<i_k\}$ and $[n]\setminus A = \{j_{k+1}<\dots<j_n\}$ we have $i(A) := \sum_{l=1}^{k} (i_l-l)$, $a_q^{\ast}(\bm{v}_{A}):=a_q^{\ast}(v_{i_1})\dots a_q^{\ast}(v_{i_k})$, and $a_q(I\bm{v}_{[n]\setminus A}):= a_q(Iv_{j_{k+1}})\dots a_q(Iv_{j_n})$.
\end{lem}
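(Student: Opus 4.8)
The plan is to denote by $\widetilde{W}(\bm{v})$ the right-hand side of \eqref{Eqn=Wickformula} and to check two things: that $\widetilde{W}(\bm{v})\Omega=\bm{v}$, and that $\widetilde{W}(\bm{v})\in\Gamma_q(\RH)$. The first is immediate, since any term indexed by a proper subset $A\subsetneq[n]$ ends with an annihilation operator and hence kills $\Omega$, while for $A=[n]$ we have $i([n])=\sum_{l=1}^{n}(l-l)=0$, so that $\widetilde{W}(\bm{v})\Omega=\bm{v}$. Since $\Omega$ is separating for $\Gamma_q(\RH)$, these two facts together force $\widetilde{W}(\bm{v})=W(\bm{v})$; thus the entire content of the lemma is the membership $\widetilde{W}(\bm{v})\in\Gamma_q(\RH)$, which I would establish by induction on $n$.

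For $n\le1$ the membership is built into the definitions: $\widetilde{W}$ of the empty tensor is $\op{Id}$, and $\widetilde{W}(v)=a_q^{\ast}(v)+a_q(Iv)$ is one of the generators when $v\in\RH$, while for a general vector $v=x+iy$ with $x,y\in\RH$ it equals $(a_q^{\ast}(x)+a_q(x))+i(a_q^{\ast}(y)+a_q(y))\in\Gamma_q(\RH)$. For the inductive step I would peel off the first tensor leg: writing $\bm{v}=v_1\otimes\bm{v}'$ with $\bm{v}'=v_2\otimes\dots\otimes v_n$, and $\bm{w}_{j}:=v_2\otimes\dots\widehat{v_j}\dots\otimes v_n$ for $2\le j\le n$, the crucial point is the operator identity
\begin{equation*}
\widetilde{W}(\bm{v})=\widetilde{W}(v_1)\,\widetilde{W}(\bm{v}')-\sum_{j=2}^{n}q^{\,j-2}\,\langle Iv_1,v_j\rangle\,\widetilde{W}(\bm{w}_{j}),
\end{equation*}
whose right-hand side lies in $\Gamma_q(\RH)$ by the inductive hypothesis (and the case $n=1$ for the factor $\widetilde{W}(v_1)$), hence so does the left-hand side.

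To prove the identity I would expand $\widetilde{W}(v_1)\widetilde{W}(\bm{v}')=a_q^{\ast}(v_1)\widetilde{W}(\bm{v}')+a_q(Iv_1)\widetilde{W}(\bm{v}')$ and write $\widetilde{W}(\bm{v}')=\sum_{B}q^{i(B)}a_q^{\ast}(\bm{v}_{B})a_q(I\bm{v}_{\{2,\dots,n\}\setminus B})$, with $B$ running over subsets of $\{2,\dots,n\}$ and $i(\cdot)$ computed for that index set. Since composing creation operators amounts to creating the ordered tensor, $a_q^{\ast}(v_1)a_q^{\ast}(\bm{v}_{B})=a_q^{\ast}(\bm{v}_{\{1\}\cup B})$, and one sees that $a_q^{\ast}(v_1)\widetilde{W}(\bm{v}')$ reproduces exactly the part of $\widetilde{W}(\bm{v})$ indexed by the subsets $A$ with $1\in A$. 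In $a_q(Iv_1)\widetilde{W}(\bm{v}')$ one moves $a_q(Iv_1)$ to the right through the creation operators by repeated use of $a_q(\xi)a_q^{\ast}(\eta)=qa_q^{\ast}(\eta)a_q(\xi)+\langle\xi,\eta\rangle\op{Id}$; the fully commuted summand $q^{|B|}a_q^{\ast}(\bm{v}_{B})a_q(I\bm{v}_{[n]\setminus B})$ turns out to be the part of $\widetilde{W}(\bm{v})$ indexed by the subsets $A$ with $1\notin A$, while the contracted summands, after the substitution $j=b_l$, $C=B\setminus\{b_l\}$, reassemble into $\sum_{j=2}^{n}q^{\,j-2}\langle Iv_1,v_j\rangle\widetilde{W}(\bm{w}_{j})$. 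Collecting the three contributions yields the displayed recursion.

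The main obstacle, and the only step that is not purely formal, is keeping track of the powers of $q$ under the relabellings that occur: the exponent $i(A)$ for $A\subseteq[n]$ has to be matched with the analogous exponent computed for the index set $\{2,\dots,n\}$ (governing the terms of $\widetilde{W}(\bm{v}')$) and for $\{2,\dots,n\}\setminus\{j\}$ (governing those of $\widetilde{W}(\bm{w}_{j})$). Concretely one verifies that $i(\{1\}\cup B)$ in $[n]$ equals $i(B)$ in $\{2,\dots,n\}$ (the smallest element contributing $0$), that re-reading $B\subseteq\{2,\dots,n\}$ as a subset of $[n]$ raises its exponent by $|B|$, and one further one-line identity handling the contracted terms; each follows immediately from the definition $i(A)=\sum_{l}(i_l-l)$, but one must be careful to keep the three index sets apart. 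Everything else is a formal consequence of the $q$-commutation relation together with the rule for composing creation (and annihilation) operators.
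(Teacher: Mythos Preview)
The paper does not give its own proof of this lemma; it simply cites \cite[Proposition 2.7]{MR1463036} and moves on. So there is nothing in the paper to compare against.

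Your proof is correct and is essentially the standard inductive argument (the one in the cited reference proceeds the same way). The recursion
\[
\widetilde{W}(v_1\otimes\bm{v}')=\bigl(a_q^{\ast}(v_1)+a_q(Iv_1)\bigr)\widetilde{W}(\bm{v}')-\sum_{j=2}^{n}q^{\,j-2}\langle Iv_1,v_j\rangle\,\widetilde{W}(\bm{w}_j)
\]
is exactly right, and your bookkeeping of the exponents checks out: for $B=C\cup\{j\}$ with $j$ in position $l$ one has $i(B)_{\{2,\dots,n\}}+(l-1)=i(C)_{\{2,\dots,n\}\setminus\{j\}}+(j-2)$, which is the ``one further one-line identity'' you allude to. The only cosmetic point is that you might state this identity explicitly rather than leaving it implicit, since it is the one place a reader could lose the thread; everything else is indeed formal.
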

\begin{rem}
The notations $i(A)$ and $i(\pi)$ are consistent, if we identify $A$ with a permutation $\pi_{A}$ of $[n]$ given by $\pi_{A}(l):= i_{l}$ for $l\leqslant k$ and $\pi_{A}(l):= j_{l}$ for $l>k$. Some authors identify these permutations with representatives of cosets $S_{n} \slash (S_{k}\times S_{n-k})$ with the minimal number of inversions. It is useful to think of $i(A)$ as the cost of moving the set $A$ to the left of $[n]$. First, you move $i_1$ to the first spot, so you need to make $i_1-1$ moves. Then the first spot is taken, so you move $i_2$ to the second one and the cost is $i_2-2$; proceed like that for other elements of $A$.
\end{rem}

We will need more information about the operators $P_q^{n}$; as we mentioned, they are injective and positive definite, so invertible in the finite dimensional setting. Actually, it was noted by Bo\.{z}ejko (see \cite[Theorem 6]{MR1649711}) that they are always invertible, with a bound (exponential in $n$) for the norm of the inverse provided. Therefore, whenever we have a partition $n=n_1+\dots + n_k$, we may consider $R_{n_1,\dots,n_k}^{\ast}$ -- the unique operator on $\HH^{\otimes n}$ such that $P_q^{n} = (P_q^{n_1}\otimes \dots \otimes P_q^{n_k}) R_{n_1,\dots,n_k}^{\ast}$. One can easily check that $R_{n_1,\dots,n_k}^{\ast}$ is really the adjoint of the identity map $R_{n_1,\dots,n_k}\colon \HH_{q}^{\otimes n_1}\otimes\dots\otimes \HH_q^{\otimes n_k} \to \HH_q^{\otimes n}$. In the case of $R_{n-k,k}^{\ast}$ we have an explicit formula (see \cite[Proof of Theorem 2.1]{MR1811255}):
\begin{equation}\label{Eqn=Rnk}
R_{n-k,k}^{\ast}(v_1\otimes\dots\otimes v_{n+k})= \sum_{A\subset [n], |A|=n-k} q^{i(A)} \bm{v}_{A}\otimes \bm{v}_{[n]\setminus A},
\end{equation}
with the same notation as in \eqref{Eqn=Wickformula}. Bo\.{z}ejko also proved that $\|R_{n,k}^{\ast}\|\leqslant C(q)$ as an operator on $\HH^{\otimes (n+k)}$ and, as a consequence, $P_q^{n+k} \leqslant C(q) P_q^{n} \otimes P_q^{k}$. It follows that the norm of $R_{n,k}\colon \HH_q^{\otimes n} \otimes \HH_q^{\otimes k} \to \HH_q^{\otimes (n+k)}$ is bounded by $\sqrt{C(q)}$; the same holds for the norm of $R_{n,k}^{\ast}$ as an operator from $\HH_q^{\otimes (n+k)}$ to $\HH_q^{\otimes n} \otimes \HH_q^{\otimes k}$.

For future use, we record here some equalities pertaining to operators $R_{n,k,l}^{\ast}$.
\begin{lem}\label{Lem=Splitting}
We have $R_{n,k,l}^{\ast} = ({\rm Id}_{n} \otimes R^{\ast}_{k,l}) R_{n,k+l}^{\ast} = (R_{n,k}^{\ast}\otimes {\rm Id_l}) R_{n+k,l}^{\ast}$.
\end{lem}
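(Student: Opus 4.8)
The plan is to reduce everything to the fact, recalled just before the lemma, that $R_{n_1,\dots,n_k}$ is nothing but the identity map of the underlying vector space $\HH^{\otimes n}$, viewed as a map from $\HH_q^{\otimes n_1}\otimes\dots\otimes\HH_q^{\otimes n_k}$ to $\HH_q^{\otimes n}$ (only the inner products on the two sides differ). Since a composition of identity maps is again an identity map, read between the appropriate inner product spaces, I would first observe the two factorizations
\[
R_{n,k,l}=R_{n,k+l}\bigl(\mathrm{Id}_n\otimes R_{k,l}\bigr)=R_{n+k,l}\bigl(R_{n,k}\otimes\mathrm{Id}_l\bigr),
\]
where both sides of each equality are the identity map of $\HH^{\otimes(n+k+l)}$ regarded as a map from $\HH_q^{\otimes n}\otimes\HH_q^{\otimes k}\otimes\HH_q^{\otimes l}$ to $\HH_q^{\otimes(n+k+l)}$: in the first factorization the intermediate space is $\HH_q^{\otimes n}\otimes\HH_q^{\otimes(k+l)}$, in the second it is $\HH_q^{\otimes(n+k)}\otimes\HH_q^{\otimes l}$. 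Taking adjoints of these identities, and using $(AB)^{\ast}=B^{\ast}A^{\ast}$ together with $(\mathrm{Id}_n\otimes S)^{\ast}=\mathrm{Id}_n\otimes S^{\ast}$ and $(S\otimes\mathrm{Id}_l)^{\ast}=S^{\ast}\otimes\mathrm{Id}_l$, then yields precisely the asserted equalities $R_{n,k,l}^{\ast}=(\mathrm{Id}_n\otimes R_{k,l}^{\ast})R_{n,k+l}^{\ast}=(R_{n,k}^{\ast}\otimes\mathrm{Id}_l)R_{n+k,l}^{\ast}$.

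Alternatively, I could verify the two equalities by a direct computation with the operators $P_q^{m}$. In the finite-dimensional setting each $P_q^{m}$ is invertible, so the defining relation $P_q^{n}=(P_q^{n_1}\otimes\dots\otimes P_q^{n_k})R_{n_1,\dots,n_k}^{\ast}$ becomes $R_{n_1,\dots,n_k}^{\ast}=(P_q^{n_1}\otimes\dots\otimes P_q^{n_k})^{-1}P_q^{n}$. Substituting this and using $(A\otimes B)^{-1}=A^{-1}\otimes B^{-1}$ together with the mixed-product rule $(A\otimes B)(C\otimes D)=AC\otimes BD$, one computes for instance
\[
(\mathrm{Id}_n\otimes R_{k,l}^{\ast})R_{n,k+l}^{\ast}=\bigl((P_q^n)^{-1}\otimes(P_q^k\otimes P_q^l)^{-1}P_q^{k+l}(P_q^{k+l})^{-1}\bigr)P_q^{n+k+l}=(P_q^n\otimes P_q^k\otimes P_q^l)^{-1}P_q^{n+k+l},
\]
which is exactly $R_{n,k,l}^{\ast}$; the second identity is obtained in the same way, grouping $n+k$ instead of $k+l$.

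I do not expect any genuine obstacle: the statement is a formal consequence of the definitions. The only points that require attention are keeping track of which tensor leg each factor acts on and the reversal of composition order when passing to adjoints, so the argument is essentially bookkeeping.
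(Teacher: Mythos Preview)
Your first argument is exactly the paper's proof: it simply records the identities $R_{n,k+l}(\mathrm{Id}_n\otimes R_{k,l})=R_{n+k,l}(R_{n,k}\otimes\mathrm{Id}_l)=R_{n,k,l}$ (all being the identity of $\HH^{\otimes(n+k+l)}$ between different $q$-inner-product structures) and takes adjoints. Your alternative computation via $R_{n_1,\dots,n_k}^{\ast}=(P_q^{n_1}\otimes\cdots\otimes P_q^{n_k})^{-1}P_q^{n}$ is also correct and amounts to the same thing unpacked.
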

\begin{proof}
It follows from equalities $R_{n,k+l}({\rm Id}_n\otimes R_{k,l}) = R_{n+k,l}(R_{n,k}\otimes {\rm Id}_l) = R_{n,k,l}$.
\end{proof}
We also introduce two complex conjugations on $\HH^{\otimes n}$: $I^{n}$ given by the formula $I^{n}(v_1\otimes \dots\otimes v_n) = Iv_1\otimes\dots\otimes Iv_n$ and $J^{n}$ given by  $J^{n}(v_1\otimes\dots\otimes v_n):= Iv_n\otimes \dots \otimes Iv_1$ (this is the modular conjugation); they are both antiunitaries on $\HH_q^{\otimes n}$. They are related by $J^{n} = I^{n} \sigma_n$, where $\sigma_n(v_1\otimes \dots \otimes v_n) := v_n\otimes \dots \otimes v_1$ is a self-adjoint unitary; usually we will just write $I$ and $J$ without the superscripts. With these two conjugations we can associate two different pairings:
\begin{enumerate}
\item\label{Dualitypairing} $m_n\colon \HH^{\otimes n}\otimes \HH^{\otimes n} \to \mathbb{C}$ given by $m_n(\bm{\xi} \otimes \bm{\eta}):= \langle J^{n}\bm{\xi}, \bm{\eta}\rangle_{q}$;
\item\label{Dualitypairing2} $\widetilde{m}_n\colon \HH^{\otimes n}\otimes \HH^{\otimes n} \to \mathbb{C}$ given by $\widetilde{m}_n(\bm{\xi}\otimes \bm{\eta}):= \langle I^{n} \bm{\xi}, \bm{\eta}\rangle_q$.
\end{enumerate}
We will continue to use the same notation for pairings that involve only part of the tensor product, i.e. $m_j$ might also mean ${\rm Id}_{n-j} \otimes m_j \otimes {\rm Id}_{n-j}\colon \HH^{\otimes n}\otimes \HH^{\otimes n} \to \HH^{\otimes n-j}\otimes \HH^{\otimes n-j}$.

We can now write a nice formula for the product of two Wick words (cf. \cite[Theorem 3.3]{MR1994546}).
\begin{prop}\label{Prop=Wickproduct}
Let $\bm{\xi} \in \HH^{\otimes n}$ and $\bm{\eta} \in \HH^{\otimes k}$. Then we have
\begin{equation}\label{Eqn=Wickproduct}
W(\bm{\xi})W(\bm{\eta})\Omega = \sum_{j=0}^{\min(n,k)} m_j(R_{n-j,j}^{\ast}(\bm{\xi})\otimes R_{j,k-j}^{\ast}(\bm{\eta})).
\end{equation}
\end{prop}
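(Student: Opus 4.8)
The plan is to exploit that $\Omega$ is cyclic for $\Gamma_q(\RH)$ and that $W(\bm{\eta})\Omega=\bm{\eta}$: then the left-hand side of \eqref{Eqn=Wickproduct} is simply the vector $W(\bm{\xi})\bm{\eta}\in\mathcal{F}_q(\HH)$, so the statement is nothing but an explicit computation of how the operator $W(\bm{\xi})$ acts on the simple tensor $\bm{\eta}$, which we carry out with the Wick formula \eqref{Eqn=Wickformula}. Substituting it and using that $a_q^{\ast}(\bm{\xi}_A)$ merely prepends the tensor $\bm{\xi}_A$, we obtain
\[
W(\bm{\xi})\bm{\eta}=\sum_{A\subset[n]}q^{i(A)}\,\bm{\xi}_A\otimes\bigl(a_q(I\bm{\xi}_{[n]\setminus A})\,\bm{\eta}\bigr),
\]
so everything reduces to a formula for the action of an ordered product of annihilation operators on a simple tensor.

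The key lemma I would isolate is the following: for $\bm{\zeta}=\zeta_1\otimes\dots\otimes\zeta_j\in\HH^{\otimes j}$ one has
\[
a_q(I\zeta_1)\cdots a_q(I\zeta_j)\,\bm{\eta}=\sum_{\substack{C\subset[k]\\|C|=j}}q^{i(C)}\,\langle J^j\bm{\zeta},\bm{\eta}_C\rangle_q\;\bm{\eta}_{[k]\setminus C}.
\]
To prove it, unfold the left-hand side one annihilation operator at a time by means of the defining formula for $a_q$: the result is a sum over injections $\sigma\colon[j]\to[k]$ whose $\sigma$-term is $q^{e(\sigma)}\bigl(\prod_{r=1}^{j}\langle I\zeta_r,\eta_{\sigma(r)}\rangle\bigr)\bm{\eta}_{[k]\setminus\sigma([j])}$, where $e(\sigma)=\sum_{r=1}^{j}\bigl(\sigma(r)-1-|\{r'>r:\sigma(r')<\sigma(r)\}|\bigr)$ records the positions hit inside the successively shrinking tensors. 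Grouping these injections by their image $C=\sigma([j])$ and writing $\sigma$ as the increasing enumeration of $C$ precomposed with a permutation $\rho\in S_j$, a short count gives $e(\sigma)=i(C)+\binom{j}{2}-i(\rho)$; since $\binom{j}{2}-i(\rho)=i(\rho w_0)$ for the longest element $w_0\in S_j$, summing over $\rho$ and substituting $\pi=\rho w_0$ reproduces exactly the expansion $q^{i(C)}\langle J^j\bm{\zeta},\bm{\eta}_C\rangle_q=q^{i(C)}\sum_{\pi\in S_j}q^{i(\pi)}\prod_{m=1}^{j}\langle I\zeta_{j+1-m},\eta_{c_{\pi(m)}}\rangle$. (Alternatively, the lemma follows by a straightforward induction on $j$, peeling off $a_q(I\zeta_1)$ and reindexing; the bookkeeping is the same.)

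Granting the lemma, apply it with $\bm{\zeta}=\bm{\xi}_{[n]\setminus A}$, prepend $\bm{\xi}_A$, and reorganise the resulting double sum according to $j:=|[n]\setminus A|$, which yields
\[
W(\bm{\xi})\bm{\eta}=\sum_{j=0}^{\min(n,k)}\ \sum_{\substack{A\subset[n],\,|A|=n-j\\ C\subset[k],\,|C|=j}}q^{i(A)+i(C)}\,\langle J^j\bm{\xi}_{[n]\setminus A},\bm{\eta}_C\rangle_q\;\bm{\xi}_A\otimes\bm{\eta}_{[k]\setminus C}.
\]
On the other hand, expanding the right-hand side of \eqref{Eqn=Wickproduct} with the explicit formula \eqref{Eqn=Rnk} for $R_{n-j,j}^{\ast}$ and $R_{j,k-j}^{\ast}$, together with the definition $m_j(\,\cdot\otimes\cdot\,)=\langle J^j\,\cdot\,,\cdot\,\rangle_q$, produces term by term the very same double sum, and the two expressions agree. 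The only genuinely delicate point in this argument is the exponent bookkeeping in the key lemma — concretely, that the successive-position exponents assemble into $i(C)+\binom{j}{2}-i(\rho)$ and that $\binom{j}{2}-i(\rho)=i(\rho w_0)$ — after which everything else is formal manipulation of finite sums.
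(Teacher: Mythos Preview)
Your proof is correct and follows the same overall scheme as the paper: both reduce to computing $a_q(I\bm{\xi}_{[n]\setminus A})\,\bm{\eta}$ after invoking the Wick formula \eqref{Eqn=Wickformula}. The difference lies entirely in how this action of the annihilation product is computed. You expand the product one annihilation operator at a time, obtain a sum over injections $\sigma\colon[j]\to[k]$, and then perform the exponent bookkeeping $e(\sigma)=i(C)+\binom{j}{2}-i(\rho)$ together with the substitution $\pi=\rho w_0$ to recognise $\langle J^j\bm{\zeta},\bm{\eta}_C\rangle_q$. The paper instead bypasses this combinatorics by a duality argument: it pairs $a_q(I\bm{\zeta})\bm{\eta}$ against a test vector $\bm{\mu}$, moves the annihilation product to the other side as $a_q^{\ast}(J\bm{\zeta})$ (which merely prepends $J\bm{\zeta}$), and then uses the factorisation $P_q^{k}=(P_q^{j}\otimes P_q^{k-j})R_{j,k-j}^{\ast}$ to switch from the $\HH_q^{\otimes k}$--inner product to the $\HH_q^{\otimes j}\otimes\HH_q^{\otimes k-j}$--inner product, which immediately yields $a_q(I\bm{\zeta})\bm{\eta}=m_j(\bm{\zeta}\otimes R_{j,k-j}^{\ast}\bm{\eta})$ with no exponent tracking at all. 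Your route is more elementary and self-contained (it never invokes the algebraic identity $P_q^{k}=(P_q^{j}\otimes P_q^{k-j})R_{j,k-j}^{\ast}$), whereas the paper's route is shorter and illustrates precisely the philosophy advertised in Subsection~\ref{Subsec=Combinatorial}, namely that the operators $R^{\ast}$ absorb the pair-partition combinatorics.
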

\begin{proof}
We have $W(\bm{\eta})\Omega = \bm{\eta}$. By linearity of the formula we can assume that both $\bm{\xi}$ and $\bm{\eta}$ are simple tensors, i.e. $\bm{\xi}=\xi_1\otimes \dots\otimes \xi_n$ and $\bm{\eta}=\eta_1\otimes \dots\otimes \eta_k$. We can use the Wick formula \eqref{Eqn=Wickformula} to write $W(\xi_1\otimes\dots\otimes \xi_n)= \sum_{A \subset [n]} q^{i(A)} a_q^{\ast}(\bm{\xi}_{A}) a_q(I\bm{\xi}_{[n]\setminus A})$. The action of $a_q^{\ast}(\bm{\xi}_{A})$ is very simple, so we just need to understand $a_q(I\bm{\xi}_{[n]\setminus A})(\eta_1\otimes \dots \otimes \eta_k)$. Say that $|A|=n-j$. Then $a_q(I\bm{\xi}_{[n]\setminus A})(\eta_1\otimes \dots \otimes \eta_k)$ will belong to $\HH^{\otimes k-j}$; let $\bm{\mu} \in \HH^{\otimes k-j}$. We will compute the inner product $\langle \bm{\mu}, a_q(I\bm{\xi}_{[n]\setminus A})(\eta_1\otimes \dots \otimes \eta_k)\rangle_q$. Because $a_q(I\bm{\xi}_{[n]\setminus A})^{\ast}=a_q^{\ast}(J\bm{\xi}_{[n]\setminus A})$ (adjoint reverses the order), we get that this is equal to $\langle J\bm{\xi}_{[n]\setminus A}\otimes \bm{\mu}, \bm{\eta}\rangle_q$. It would be easier to compute this inner product in $\HH_q^{\otimes j}\otimes \HH_q^{\otimes k-j}$ instead of $\HH_q^{\otimes k}$. Because of the formula $P_q^k = (P_q^{j}\otimes P_q^{k-j})R^{\ast}_{j,k-j}$, it is possible to switch between the two, at the expense of applying $R^{\ast}_{j,k-j}$ to $\bm{\eta}$. This gives us
\[
\langle J\bm{\xi}_{[n]\setminus A}\otimes \bm{\mu}, \bm{\eta}\rangle_q = \widetilde{m}_{k-j}m_{j} (I\bm{\mu}\otimes\bm{\xi}_{[n]\setminus A} )\otimes R^{\ast}_{j,k-j}(\bm{\eta}).
\]
It follows that $a_q(I\bm{\xi}_{[n]\setminus A})\bm{\eta}= m_j(\bm{\xi}_{[n]\setminus A}\otimes R^{\ast}_{j,k-j}(\bm{\eta}))$. We can finish the proof by invoking the formula $R^{\ast}_{n-j,j} = \sum_{A \subset [n], |A|=n-j} q^{i(A)} \bm{\xi}_{A}\otimes \bm{\xi}_{[n]\setminus A}$, combined with the formula for $a_q^{\ast}(\bm{\xi}_{A})$.
\end{proof}
We will need one more ingredient, crucial for constructing approximating maps on the $q$-Gaussian algebras.
\begin{prop}[{\cite[Theorem 2.11]{MR1463036}}]
Let $T\colon \RH \to \RH$ be a contraction on a real Hilbert space. Then there exists a unique map on $\Gamma_q(\RH)$, called the \textbf{second quantisation} of $T$ and denoted by $\Gamma_q(T)$, which satisfies $\Gamma_q(T)(W(\xi_1\otimes \dots\otimes \xi_n)) = W(T\xi_1\otimes\dots\otimes T\xi_n)$. Moreover, this map is unital, completely positive and trace-preserving.
\end{prop}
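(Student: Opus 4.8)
The plan is to construct $\Gamma_q(T)$ by a dilation argument, reducing the case of an arbitrary contraction to that of an orthogonal transformation, where everything is transparent. I would first extend $T$ to the complexification $\HH$, keeping it a contraction commuting with the conjugation $I$, and treat the case where $U\colon\RH\to\RH$ is orthogonal. Then its complexification is a unitary on $\HH$ commuting with $I$, and since $U^{\otimes n}$ commutes with every coordinate-permutation operator on $\HH^{\otimes n}$ it commutes with $P_q^{n}$, hence is unitary for $\langle\cdot,\cdot\rangle_q$; thus $\mathcal{F}_q(U):=\bigoplus_{n\geqslant 0}U^{\otimes n}$ is a unitary on $\mathcal{F}_q(\HH)$ fixing $\Omega$. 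A check on simple tensors gives $\mathcal{F}_q(U)\,a_q^{\ast}(\xi)\,\mathcal{F}_q(U)^{\ast}=a_q^{\ast}(U\xi)$ and, taking adjoints, $\mathcal{F}_q(U)\,a_q(\xi)\,\mathcal{F}_q(U)^{\ast}=a_q(U\xi)$, so $\mathrm{Ad}(\mathcal{F}_q(U))$ sends each generator $a_q^{\ast}(\xi)+a_q(\xi)$, $\xi\in\RH$, to $a_q^{\ast}(U\xi)+a_q(U\xi)$; it therefore restricts to a $\ast$-automorphism $\Gamma_q(U)$ of $\Gamma_q(\RH)$, automatically unital and completely positive and, since it fixes $\Omega$, trace-preserving. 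For a Wick word, $\Gamma_q(U)(W(\bm{v}))\in\Gamma_q(\RH)$ has $\Omega$-vector $U^{\otimes n}\bm{v}$, so uniqueness of Wick words forces $\Gamma_q(U)(W(v_1\otimes\dots\otimes v_n))=W(Uv_1\otimes\dots\otimes Uv_n)$.

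For a general contraction $T$ on $\RH$ I would take its standard orthogonal dilation: an orthogonal transformation $\widetilde{U}$ of $\RH\oplus\RH$ (assembled from $T$, $T^{\ast}$ and the defect operators $(1-T^{\ast}T)^{1/2}$, $(1-TT^{\ast})^{1/2}$) such that $T=P\widetilde{U}\iota$, where $\iota\colon\RH\to\RH\oplus\RH$ is $\xi\mapsto(\xi,0)$ and $P$ is the orthogonal projection onto the first summand; write $P$ also for its complexification $\HH\oplus\HH\to\HH$. The inclusion $\iota$ induces $\mathcal{F}_q(\HH)\subset\mathcal{F}_q(\HH\oplus\HH)$, and, $\Omega$ being separating, restriction to $\mathcal{F}_q(\HH)$ is a normal $\ast$-isomorphism identifying $\Gamma_q(\RH)$ with a von Neumann subalgebra of $\Gamma_q(\RH\oplus\RH)$ in which $W(\bm{v})$ corresponds to the Wick word of $\bm{v}$ in the larger algebra whenever $v_i\in\RH$. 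The orthogonal projection $e$ of $\mathcal{F}_q(\HH\oplus\HH)$ onto $\mathcal{F}_q(\HH)$ acts on the $n$-particle space as $P^{\otimes n}$: the latter is idempotent with the right range and commutes with $P_q^{n}$, hence is self-adjoint for $\langle\cdot,\cdot\rangle_q$. Since the trace is finite, Takesaki's theorem provides the trace-preserving (normal, unital, completely positive) conditional expectation $E\colon\Gamma_q(\RH\oplus\RH)\to\Gamma_q(\RH)$, characterised by $E(x)\Omega=e(x\Omega)$. I then set $\Gamma_q(T):=E\circ\Gamma_q(\widetilde{U})|_{\Gamma_q(\RH)}$. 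As a composition of unital completely positive, normal, trace-preserving maps it has all of these properties, and for $v_i\in\RH$,
\[
\Gamma_q(T)(W(\bm{v}))\,\Omega=E\bigl(W(\widetilde{U}v_1\otimes\dots\otimes\widetilde{U}v_n)\bigr)\Omega=P^{\otimes n}(\widetilde{U}v_1\otimes\dots\otimes\widetilde{U}v_n)=Tv_1\otimes\dots\otimes Tv_n;
\]
since $\Gamma_q(T)(W(\bm{v}))\in\Gamma_q(\RH)$, uniqueness of Wick words yields $\Gamma_q(T)(W(v_1\otimes\dots\otimes v_n))=W(Tv_1\otimes\dots\otimes Tv_n)$.

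For uniqueness I would observe that the linear span of the Wick words $W(v_1\otimes\dots\otimes v_n)$ ($n\geqslant 0$, $v_i\in\RH$) is a unital $\ast$-subalgebra of $\Gamma_q(\RH)$: it is closed under products by Proposition \ref{Prop=Wickproduct}, closed under adjoints since $W(v_1\otimes\dots\otimes v_n)^{\ast}=W(v_n\otimes\dots\otimes v_1)$ for real $v_i$, and contains the generators $a_q^{\ast}(\xi)+a_q(\xi)$; hence it is $\sigma$-weakly dense, and a normal map with the prescribed values on Wick words is unique. The one genuinely non-formal ingredient is the fact that operators of the form $A^{\otimes n}$ commute with the symmetriser $P_q^{n}$, which is what makes $\mathcal{F}_q(U)$ unitary and identifies $e$ with $P^{\otimes n}$; I expect this to be the only point that is not pure bookkeeping around the uniqueness of Wick words. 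The conceptual mechanism — and the reason for routing through a unitary rather than trying to handle $\mathcal{F}_q(T)=\bigoplus_n T^{\otimes n}$ directly — is the dilation, which also makes complete positivity, not obvious from $\mathcal{F}_q(T)$ alone, automatic.
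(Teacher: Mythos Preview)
The paper does not prove this proposition; it is quoted from \cite[Theorem 2.11]{MR1463036} and stated without proof, so there is nothing to compare against on the paper's side. Your dilation argument is correct and is essentially the standard route to this result: handle orthogonal $U$ by conjugation with the Fock-space unitary $\mathcal{F}_q(U)$, then reduce a general contraction to the orthogonal case via the Halmos dilation on $\RH\oplus\RH$ together with the trace-preserving conditional expectation onto $\Gamma_q(\RH)$. The identification of $\Gamma_q(\RH)$ with the obvious subalgebra of $\Gamma_q(\RH\oplus\RH)$ is not circular, since for an isometric inclusion the Fock functor $\bigoplus_n\iota^{\otimes n}$ is itself an isometry (same commutation with $P_q^n$ as in the unitary case), which gives the embedding directly. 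One small caveat on uniqueness: as stated, the proposition does not explicitly demand normality, and the span of Wick words is only $\sigma$-weakly dense, not norm dense, in $\Gamma_q(\RH)$; your uniqueness argument therefore establishes uniqueness among \emph{normal} maps, which is the intended reading in this von Neumann-algebraic context but is worth making explicit.
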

\section{CMAP and Haagerup's argument}\label{Sec=Haagerup}
In this short section we discuss why Theorem \ref{Thm=Polybound} implies Corollary \ref{Cor=CMAP}, basing on a classical argument of Haagerup. We first need to define the $w^{\ast}$-complete metric approximation property.
\begin{defn}
Let $\mathsf{M}$ be a von Neumann algebra. We say that it has the \textbf{$w^{\ast}$-complete metric approximation property} if there exists a net of maps $\{T_i\colon \mathsf{M}\to\mathsf{M}\}_{i\in I}$ that are finite rank, completely contractive, and $\lim_{i\in I} T_{i}(x) = x$ in the $w^{\ast}$-topology for any $x\in \mathsf{M}$.
\end{defn}
We denote by $P_n\colon \Gamma_q(\RH) \to \Gamma_q(\RH)$ the projection onto Wick words of length $n$, i.e. the operator $P_n(W(\xi_1\otimes \dots \otimes \xi_m) = \delta_{nm} W(\xi_1\otimes \dots \otimes \xi_m)$; as we will discuss in the next section, it extends to a continuous map on $\Gamma_q(\RH)$.
\begin{proof}[Proof of Corollary \ref{Cor=CMAP}]
We assume that Theorem \ref{Thm=Polybound} holds, i.e. $\|P_n\|_{\op{cb}} \leqslant C(q) n$. We need to define the net of approximating maps. We consider $T_{n,t}:= \Gamma_q(e^{-t}) Q_n$, where $Q_n:=\sum_{k\leqslant n} P_k$ is the projection onto words of length at most $n$. These maps are finite rank (recall that we assume $\dim \RH <\infty$) and we would like to check that they are (almost) completely contractive, if we let $n$ depend on $t$. We have
\[
\|T_{n,t}\|_{\op{cb}} \leqslant \|\Gamma_q(e^{-t})\|_{\op{cb}} + \|\Gamma_q(e^{-t})({\rm Id}-Q_n)\|_{\op{cb}}
\]
by the triangle inequality. Clearly $\|\Gamma_q(e^{-t})\|_{\op{cb}}\leqslant 1$ and $\Gamma_q(e^{-t})({\rm Id}-Q_n) = \sum_{k=n+1}^{\infty} e^{-kt} P_k$, so $\|\Gamma_q(e^{-t})({\rm Id}-Q_n)\|_{\op{cb}} \leqslant C(q) \sum_{k=n+1} e^{-kt} k$. This is a tail of a convergent series, so we can make it arbitrarily small if we let $n$ be large enough. It will give a bound $\|T_{n,t}\|_{\op{cb}} \leqslant 1+\varepsilon$, so the maps $S_{n,t}:=\frac{T_{n,t}}{\|T_{n,t}\|_{\op{cb}}}$ are completely contractive and for appropriate choice of $n$ and $t$ they do not differ much from $T_{n,t}$. In order to check convergence $\lim S_{n,t} x = x$ it suffices to check it for $T_{n,t}$. Note that the operators $T_{n,t}$ induce contractions on the level of the $L^{2}$-space, i.e. on the Fock space $\mathcal{F}_q(\HH)$. Since $T_{n,t}x$ sits inside the unit ball, if it converges in the $L^{2}$-norm, it also converges strongly, hence ultraweakly; it suffices to prove the convergence in the $L^{2}$-norm. But the operators $T_{n,t}$ are uniformly bounded, so it is enough to check this convergence on a dense subset, and we can choose tensors of finite rank as such a subspace; it is very simple to check the convergence there.
\end{proof}
\section{Proof of the main result}\label{Sec=Mainproof}
We would like to show now that the projection $P_n\colon \Gamma_q(\RH) \to \Gamma_q(\RH)$ is completely bounded, and the bound on the cb norm is polynomial in $n$. In order to proceed, we need a notation for the image of $P_n$ -- we will denote the space of Wick words of length $n$ by $X_n$. We will first deal with the operator space theoretic considerations and then go on straight to the proof of the combinatorial formula presented in Proposition \ref{Prop=Combinatorial}. The consistent use of properties of the operators $R_{n,k}^{\ast}$ (cf. \eqref{Eqn=Rnk}) instead of combinatorics of pair partitions will be key to obtaining a simple proof.

\subsection{Operator space of Wick words of length $n$ }
We start with a non-commutative Khintchine inequality obtained by Nou (see \cite[Theorem 1]{MR2091676}), which provides the operator space structure of $X_n$. 
\begin{thm}
Let $\bm{\xi} \in \op{B}(\KK)\otimes \HH^{\otimes n}$. Then we have 
\begin{align}\label{Eqn=Khintchine}
\max_{0\leqslant k \leqslant n}\|({\rm Id}\otimes R^{\ast}_{n-k,k})(\bm{\xi})\| &\leqslant \|({\rm Id}\otimes W)(\bm{\xi})\| \\
&\leqslant C(q)(n+1)\max_{0\leqslant k \leqslant n}\|({\rm Id}\otimes R^{\ast}_{n-k,k})(\bm{\xi})\|, \notag
\end{align}
where the norm $\|({\rm Id}\otimes R^{\ast}_{n-k,k})(\bm{\xi})\|$ is computed in $\op{B}(\KK)\otimes_{\op{min}} (\HH_q^{\otimes n-k})_{c}\otimes_{h} (\HH_q^{\otimes k})_r$.
\end{thm}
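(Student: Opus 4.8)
The plan is to identify, for each $k\in\{0,\dots,n\}$, the norm $\|({\rm Id}\otimes R^{\ast}_{n-k,k})(\bm\xi)\|$ with an honest operator norm and then prove the two inequalities separately. By Proposition \ref{Prop=Haageruptensor}(i), together with the fact that the modular conjugation $J$ is an isometry, in the finite-dimensional setting there is a complete isometry $\op{B}(\KK)\otimes_{\op{min}}(\HH_q^{\otimes n-k})_{c}\otimes_{h}(\HH_q^{\otimes k})_r\simeq\op{B}(\KK\otimes\HH_q^{\otimes k},\,\KK\otimes\HH_q^{\otimes n-k})$, under which $({\rm Id}\otimes R^{\ast}_{n-k,k})(\bm\xi)$ corresponds to the operator $\widehat{T}_k$ obtained by contracting the last $\HH_q^{\otimes k}$-leg of $({\rm Id}\otimes R^{\ast}_{n-k,k})(\bm\xi)$ against the input via the pairing $m_k$. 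Thus the theorem reduces to $\max_k\|\widehat{T}_k\|\leqslant\|({\rm Id}\otimes W)(\bm\xi)\|\leqslant C(q)(n+1)\max_k\|\widehat{T}_k\|$: the lower bound will realise each $\widehat{T}_k$ as a corner of $({\rm Id}\otimes W)(\bm\xi)$, while the upper bound will reconstruct $({\rm Id}\otimes W)(\bm\xi)$ from these corners with loss $C(q)(n+1)$.

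For the lower bound I would apply Proposition \ref{Prop=Wickproduct} to a vector $\bm\eta\in\HH^{\otimes k}$, using $W(\bm\eta)\Omega=\bm\eta$ and $R^{\ast}_{k,0}={\rm Id}$: the top-degree ($j=k$) term shows that the component of $({\rm Id}\otimes W)(\bm\xi)(x\otimes\bm\eta)$ lying in $\KK\otimes\HH_q^{\otimes n-k}$ is exactly $\widehat{T}_k(x\otimes\bm\eta)$. Hence $\widehat{T}_k$ is the compression of $({\rm Id}\otimes W)(\bm\xi)\in\op{B}(\KK\otimes\mathcal{F}_q(\HH))$ to the corner $\op{B}(\KK\otimes\HH_q^{\otimes k},\,\KK\otimes\HH_q^{\otimes n-k})$, and therefore $\|\widehat{T}_k\|\leqslant\|({\rm Id}\otimes W)(\bm\xi)\|$.

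For the upper bound I would decompose the Wick operator according to the number of annihilations. By the Wick formula \eqref{Eqn=Wickformula}, $W(\bm\xi)=\sum_{k=0}^{n}W_k(\bm\xi)$ with $W_k(\bm\xi):=\sum_{A\subset[n],\,|A|=n-k}q^{i(A)}a_q^{\ast}(\bm\xi_{A})\,a_q(I\bm\xi_{[n]\setminus A})$. Each $W_k(\bm\xi)$ shifts the degree on $\mathcal{F}_q(\HH)$ by $n-2k$ and vanishes on $\HH^{\otimes m}$ for $m<k$, so (after tensoring with $\op{B}(\KK)$) it is block-diagonal with mutually orthogonal blocks, whence $\|({\rm Id}\otimes W)(\bm\xi)\|\leqslant(n+1)\max_{k}\|({\rm Id}\otimes W_k)(\bm\xi)\|$. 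The crux is then the estimate $\|({\rm Id}\otimes W_k)(\bm\xi)\|\leqslant C(q)\|\widehat{T}_k\|$, which I would obtain by reading off from the proof of Proposition \ref{Prop=Wickproduct} the factorisation, valid for every $m\geqslant k$,
\[
({\rm Id}\otimes W_k)(\bm\xi)\big|_{\KK\otimes\HH_q^{\otimes m}}=({\rm Id}_{\KK}\otimes R_{n-k,\,m-k})\circ(\widehat{T}_k\otimes {\rm Id}_{\HH_q^{\otimes m-k}})\circ({\rm Id}_{\KK}\otimes R^{\ast}_{k,\,m-k}),
\]
where $\widehat{T}_k$ acts on the $\KK\otimes\HH_q^{\otimes k}$-leg only. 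Since Bo\.{z}ejko's estimate recalled in Section \ref{Sec=Prelim} gives $\|R_{a,b}\|\leqslant\sqrt{C(q)}$ and $\|R^{\ast}_{a,b}\|\leqslant\sqrt{C(q)}$ as maps between the relevant $q$-deformed Hilbert spaces, and $\|\widehat{T}_k\otimes {\rm Id}\|=\|\widehat{T}_k\|$, passing to the supremum over $m$ gives $\|({\rm Id}\otimes W_k)(\bm\xi)\|\leqslant C(q)\|\widehat{T}_k\|$; combined with the splitting this proves the upper bound (after renaming $C(q)$).

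The only quantitative ingredient is Bo\.{z}ejko's uniform bound $\|R_{a,b}\|\leqslant\sqrt{C(q)}$, which is already at our disposal, so the real work -- and the main obstacle -- is ``soft''. It consists, first, of unwinding the column/row/conjugate conventions and the pairing $m_k$ in order to justify the complete isometry of the first paragraph (in particular, tracking that it is $J$, not $I$, that appears through $m_k$, and that it is norm-preserving); and second, of checking the block factorisation of $W_k(\bm\xi)$ against Proposition \ref{Prop=Wickproduct}, including the bookkeeping of tensor legs. One should also keep in mind the standing reduction to $\dim\RH<\infty$, which is what makes ``compact $=$ bounded'' and the finite-dimensional self-duality of the Haagerup tensor product available here.
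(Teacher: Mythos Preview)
The paper does not prove this theorem: it is quoted verbatim from Nou \cite[Theorem~1]{MR2091676} and used as a black box. So there is no ``paper's own proof'' to compare against.

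That said, your proposal is essentially correct and is, in outline, Nou's original argument. A couple of minor comments. First, your phrase ``block-diagonal with mutually orthogonal blocks, whence $\|({\rm Id}\otimes W)(\bm\xi)\|\leqslant(n+1)\max_k\|({\rm Id}\otimes W_k)(\bm\xi)\|$'' slightly misplaces the logic: the $(n+1)$ factor comes simply from the triangle inequality applied to $W=\sum_{k=0}^{n}W_k$; the block-diagonality of each individual $W_k$ (domain blocks $\HH_q^{\otimes m}$ mapping to pairwise orthogonal range blocks $\HH_q^{\otimes m+n-2k}$) is what you actually need \emph{afterwards}, to pass from $\sup_m\|W_k|_{\HH_q^{\otimes m}}\|$ to $\|W_k\|$. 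Second, the factorisation $W_k(\bm\xi)|_{\HH_q^{\otimes m}}=R_{n-k,m-k}\circ(\widehat T_k\otimes{\rm Id})\circ R^{\ast}_{k,m-k}$ is exactly the $j=k$ summand of Proposition~\ref{Prop=Wickproduct} once one remembers that $R_{n-k,m-k}$ is the identity on underlying vector spaces; you have identified this correctly, and Bo\.{z}ejko's bound $\|R_{a,b}\|,\|R^{\ast}_{a,b}\|\leqslant\sqrt{C(q)}$ then gives $\|W_k\|\leqslant C(q)\|\widehat T_k\|$. The lower bound via the $(n{-}k,k)$-corner (the $j=k$ term with $R^{\ast}_{k,0}={\rm Id}$) is also right. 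The bookkeeping with $J$ versus $I$ and with Proposition~\ref{Prop=Haageruptensor}(i) is routine, as you note.
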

It means that the map $\iota_{n}\colon X_n \to Y_n:= \bigoplus_{k=0}^{n} (\HH_q^{\otimes n-k})_{c}\otimes_{h} (\HH_q^{\otimes k})_r$ given by $W(\bm{\xi})\mapsto (R^{\ast}_{n-k,k}(\bm{\xi}))_{0\leqslant k\leqslant n}$ is a completely isomorphic embedding, with distortion at most $C(q)(n+1)$. Therefore it suffices to prove that the map $\iota_n\circ P_n$ is completely bounded. It will actually be easier to prove that the predual of this map is completely bounded; it is not immediately clear that this map should admit a predual, but we will write it down explicitly. So we will work with the map
\[
\widetilde{\Phi}_n\colon \ell^1-\bigoplus_{k=0}^{n}(\CCH_q^{\otimes n-k})_{r}\otimes_h (\CCH_{q}^{\otimes k})_{c} \to L^{1}(\Gamma_q(\RH))   
\]
given by $\widetilde{\Phi}_n(\overline{\bm{\xi}}_0,\dots,\overline{\bm{\xi}}_n):= \sum_{k=0}^{n} \widetilde{\Phi}_{n-k,k}(\overline{\bm{\xi}}_k)$ with $\widetilde{\Phi}_{n-k,k}(\overline{\bm{\xi}}):= (W(R_{n-k,k}(\bm{\xi})))^{\ast}$. Recall that 
\[
R_{n-k,k}(\xi_1\otimes \dots \otimes \xi_{n-k}\otimes_h \xi_{n-k+1}\otimes \dots \otimes \xi_n) = \xi_1\otimes\dots\otimes \xi_n.
\]
We can use the identification $L^{1}((\Gamma_q(\RH)^{\op{op}})) \simeq \overline{L^{1}(\Gamma_q(\RH))}$ given by $x \mapsto \overline{x^{\ast}}$ and take the complex conjugate to obtain a map
\[
\Phi_n \colon \ell^1-\bigoplus_{k=0}^{n}(\HH_q^{\otimes n-k})_{r}\otimes_h (\HH_{q}^{\otimes k})_{c} \to L^{1}((\Gamma_q(\RH))^{\op{op}})  
\]
given by $\Phi_n(\bm{\xi}_0,\dots,\bm{\xi}_n)= \sum_{k=0}^{n} \Phi_{n-k,k}(\bm{\xi}_k)$, where $\Phi_{n-k,k}(\bm{\xi})= W(R_{n-k,k}(\bm{\xi}))$.
\begin{rem}\label{Rem=reduction}
In order to prove Theorem \ref{Thm=Polybound}, it is enough to prove that $\|\Phi_{n-k,k}\|_{cb} \leqslant C(q)$ for any $n$ and $k$, because this will imply that $\|\Phi_{n}\|_{\op{cb}}\leqslant C(q)$; the reason is that $\Phi_n$ is a map defined on an $\ell^1$-direct sum. Moreover, it implies that $\|\iota_n \circ P_n\|_{\op{cb}} \leqslant C(q)$, hence $\|P_n\|_{\op{cb}} \leqslant C'(q) (n+1)\leqslant 2C'(q) n$, as $\iota_n$ is an isomorphic embedding with distortion controlled by a multiple of $(n+1)$.
\end{rem}
Since it will simplify the notation, we will work with the maps $\Phi_{n,k}$. We will write them down in terms of some other maps, whose complete boundedness is easy to check. 
\begin{lem}[{\cite[Lemma 3.4]{1110.4918}}]
Let $v_{n,k}\colon (\HH_q^{\otimes n})_{r} \otimes_{h} (\HH_q^{\otimes k})_{c} \to L^{1}((\Gamma_q(\RH))^{\op{op}})$ be given by
\[
v_{n,k}(\xi_1\otimes \dots \otimes \xi_n \otimes_h \xi_{n+1}\otimes \dots \otimes \xi_{n+k}) := W(\xi_1\otimes\dots \otimes \xi_n) W(\xi_{n+1}\otimes \dots \otimes \dots \xi_{n+k}).
\]
Then $\|v_{n,k}\|_{\op{cb}} \leqslant 1$.
\end{lem}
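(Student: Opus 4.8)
The plan is to prove that $\|v_{n,k}\|_{\op{cb}}\leqslant 1$ by exhibiting $v_{n,k}$ as a composition of completely contractive maps, exploiting the identifications of the Haagerup tensor product with trace-class operators from Proposition \ref{Prop=Haageruptensor}(ii). The key observation is that the domain $(\HH_q^{\otimes n})_{r}\otimes_h (\HH_q^{\otimes k})_{c}$ is, by Proposition \ref{Prop=Haageruptensor}(ii), completely isometrically the space of trace-class operators $S^1(\overline{\HH_q^{\otimes k}},\HH_q^{\otimes n})$ (after the usual identification $\overline{(\HH_q^{\otimes k})}\simeq (\HH_q^{\otimes k})^{\ast}$), while the target $L^1((\Gamma_q(\RH))^{\op{op}})$ is the predual of $\op{B}(\mathcal{F}_q(\HH))$ restricted to $(\Gamma_q(\RH))^{\op{op}}$. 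So the natural strategy is to realize $v_{n,k}$ as the predual of a completely contractive multiplication-type map on the $\op{B}(\HH)$-level and invoke that the cb norm of a map equals the cb norm of its adjoint.

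First I would pass to the adjoint map $v_{n,k}^{\ast}$, which goes from $\Gamma_q(\RH)\subset \op{B}(\mathcal{F}_q(\HH))$ into the dual of $(\HH_q^{\otimes n})_{r}\otimes_h (\HH_q^{\otimes k})_{c}$. By self-duality of the Haagerup tensor product and the identities $\HH_r^{\ast}\simeq \overline{\HH}_c$, $\HH_c^{\ast}\simeq \overline{\HH}_r$ recorded after the column/row definitions, this dual is $(\overline{\HH_q^{\otimes n}})_{c}\otimes_h (\overline{\HH_q^{\otimes k}})_{r}$, which by Proposition \ref{Prop=Haageruptensor}(i) is the space of compact operators $\op{K}(\overline{\HH_q^{\otimes k}},\overline{\HH_q^{\otimes n}})$. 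Concretely I expect $v_{n,k}^{\ast}$ to send an operator $x\in\Gamma_q(\RH)$ to the ``matrix coefficient'' operator $\overline{\bm{\eta}}\mapsto$ the piece of $W(\bm{\xi})\,x\,(W(\bm{\eta}))^{\ast}$ that lives in the correct tensor slot; the defining formula $v_{n,k}(\bm{\xi}\otimes_h\bm{\eta})=W(\bm{\xi})W(\bm{\eta})$ should dualize, via the trace pairing $\tau((\,\cdot\,)x)$, into exactly such a compression. The point is that on the level of $\op{B}(\mathcal{F}_q(\HH))$ this compression is implemented by sandwiching $x$ between the creation operator $a_q^{\ast}(\bm{\xi})$ and an annihilation operator, and creation and annihilation operators of unit vectors have operator norm controlled by a factor coming from the $q$-commutation relations but, crucially, a row (resp.\ column) of them assembles into a contraction.

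The cleanest way to organize the contractivity, following Avsec, is to factor $v_{n,k}$ through a single creation/annihilation pairing. I would write $W(\bm{\xi})W(\bm{\eta})\Omega$ using Proposition \ref{Prop=Wickproduct}, but for the cb estimate the more efficient route is to note that multiplication $\Gamma_q(\RH)\times\Gamma_q(\RH)\to\Gamma_q(\RH)$, viewed as a map $\Gamma_q(\RH)\otimes_h\Gamma_q(\RH)\to\Gamma_q(\RH)$, is completely contractive (multiplication on any operator algebra is a complete contraction on the Haagerup tensor product), and that the two component embeddings $(\HH_q^{\otimes n})_{r}\to \Gamma_q(\RH)$, $\bm{\xi}\mapsto W(\bm{\xi})$, and $(\HH_q^{\otimes k})_{c}\to\Gamma_q(\RH)$, $\bm{\eta}\mapsto W(\bm{\eta})$, are completely contractive when the domains carry the row and column structures respectively. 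Then $v_{n,k}$ is the composition of $(\text{row embedding})\otimes_h(\text{column embedding})$ with multiplication, hence completely contractive; post-composing with the canonical complete contraction $\Gamma_q(\RH)\hookrightarrow L^1((\Gamma_q(\RH))^{\op{op}})$ (really the identification $\overline{x}\mapsto x^{\ast}$ landing a bounded word into the predual after pairing, which is norm-nonincreasing because $\Omega$ is a trace vector) gives the stated map into $L^1$.

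The main obstacle is verifying that the single-word embeddings $\bm{\xi}\mapsto W(\bm{\xi})$ are completely contractive with the specified row/column structures, rather than merely bounded. This is where the asymmetry between row and column must be used correctly: I expect the row structure on the first factor to match the creation-operator side and the column structure on the second factor to match the annihilation side, so that when the Khintchine-type estimate \eqref{Eqn=Khintchine} is read in the right order the relevant constant is exactly $1$ and not $C(q)(n+1)$. Establishing these two embeddings as complete contractions amounts to checking that the creation operators $a_q^{\ast}(\cdot)$ assemble into a row contraction and the annihilation operators $a_q(\cdot)$ into a column contraction on the $q$-Fock space; this is a direct computation with the inner product $P_q^{\bullet}$ and the $q$-commutation relations, and the sign of $q$ together with positivity of $P_q^n$ guarantees the contractivity. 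Once these two facts are in place, complete contractivity of Haagerup-tensor multiplication does the rest, and the bound $\|v_{n,k}\|_{\op{cb}}\leqslant 1$ follows.
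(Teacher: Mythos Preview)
Your factorization through the von Neumann algebra $\Gamma_q(\RH)$ does not work: the single-word embeddings $(\HH_q^{\otimes n})_{r}\to\Gamma_q(\RH)$ and $(\HH_q^{\otimes k})_{c}\to\Gamma_q(\RH)$, $\bm{\xi}\mapsto W(\bm{\xi})$, are \emph{not} completely contractive --- they are not even contractive at the Banach level. For $n=1$ and a unit vector $\xi$, the operator $W(\xi)=a_q^{\ast}(\xi)+a_q(\xi)$ has norm $\frac{2}{\sqrt{1-q}}>1$ (e.g.\ $2$ when $q=0$). More generally, the Khintchine inequality \eqref{Eqn=Khintchine} says that each row/column norm is a \emph{lower} bound for $\|W(\bm{\xi})\|$, not an upper bound, so you are reading it in the wrong direction. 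Your last paragraph also conflates $W(\bm{\xi})$ with a pure creation (or annihilation) operator; the Wick formula \eqref{Eqn=Wickformula} shows $W(\bm{\xi})$ involves all the mixed creation--annihilation terms, so knowing that $a_q^{\ast}$'s form a row contraction is not enough.

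The paper avoids this obstacle by working at the $L^{2}$ level rather than the $L^{\infty}$ level. The inclusions $\HH_q^{\otimes n}\hookrightarrow \mathcal{F}_q(\HH)=L^{2}(\Gamma_q(\RH))$ are isometric Hilbert-space embeddings, hence completely isometric between the corresponding row (resp.\ column) operator spaces. One then identifies $\overline{L^{2}}_{r}\otimes_h L^{2}_{c}\simeq S^{1}(\mathcal{F}_q(\HH))$ and observes that the map $\overline{x}\otimes_h y\mapsto x^{\ast}y\in L^{1}$ is precisely the predual of the inclusion $(\Gamma_q(\RH))^{\op{op}}\hookrightarrow \op{B}(\mathcal{F}_q(\HH))$ by right multiplication, which is completely isometric; hence its predual is completely contractive. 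Restricting to the subspaces $(\HH_q^{\otimes n})_{r}$ and $(\HH_q^{\otimes k})_{c}$ gives $\|v_{n,k}\|_{\op{cb}}\leqslant 1$. The essential point you are missing is that the multiplication map one needs is $L^{2}\times L^{2}\to L^{1}$, not $L^{\infty}\times L^{\infty}\to L^{\infty}$.
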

\begin{proof}
We can view $v_{n,k}$ as a restriction of the multiplication map on $\mathcal{F}_q(\HH)_{r}\otimes_{h} \mathcal{F}_q(\HH)_{c}$. It will be convenient to view $\mathcal{F}_q(\HH)$ as the $L^{2}$-space of $\Gamma_q(\RH)$. By Proposition \ref{Prop=Haageruptensor} we have the identification $ \left(\overline{L^{2}(\Gamma_q(\RH))}\right)_{r} \otimes_{h} \left(L^{2}(\Gamma_q(\RH))\right)_{c} \simeq S^1(\mathcal{F}_q(\HH))$ given by (the extension of) $\overline{x} \otimes_{h} y \mapsto |x\rangle\langle y|$. It is not hard to check that the map from $\left(\overline{L^{2}(\Gamma_q(\RH))}\right)_{r} \otimes_{h} \left(L^{2}(\Gamma_q(\RH))\right)_{c}$ to $L^{1}(\Gamma_q(\RH))$ given by $\overline{x} \otimes_{h} y \mapsto x^{\ast}y$ is precisely the predual of the inclusion $(\Gamma_q(\RH))^{\op{op}} \hookrightarrow \op{B}(\mathcal{F}_q(\HH))$, given by the right action (traciality of the vacuum state is important here). We get a map from $L^{2}(\Gamma_q(\RH)) \otimes_{h} L^{2}(\Gamma_q(\RH))$ to $L^{1}(\Gamma_q(\RH))$ by using the identification $\left(L^{2}(\Gamma_q(\RH))\right)_{r} \simeq \left(\overline{L^{2}(\Gamma_q(\RH))}\right)_{r}$ induced by $x \mapsto \overline{x^{\ast}}$.
\end{proof}
As mentioned in the introduction, the important combinatorial input will be a formula presenting a Wick word in terms of products of shorter Wick words. This aim will be achieved using the following maps.
\begin{lem}\label{Lem=ShortWick}
Let $w^{j}_{n,k}\colon (\HH_q^{\otimes n})_{r} \otimes_{h} (\HH_q^{\otimes k})_{c} \to L^{1}((\Gamma_q(\RH))^{\op{op}})$ (for $j\leq \min(n,k)$) be given by 
\[
w^{j}_{n,k} = v_{n-j,k-j} \circ (\widetilde{m}_j (R^{\ast}_{n-j,j} \otimes R^{\ast}_{j,k-j})).
\]
Then $\|w^{j}_{n,k}\|_{\op{cb}} \leqslant D(q)$.
\end{lem}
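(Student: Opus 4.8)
The plan is to bound $\|w^j_{n,k}\|_{\op{cb}}$ by factoring it as the composition displayed in the statement and controlling each factor separately, using the self-duality and multiplicativity of the Haagerup tensor product together with the norm bounds on the operators $R^{\ast}_{\bullet,\bullet}$ recorded in the preliminaries. Since $\|v_{n-j,k-j}\|_{\op{cb}}\leqslant 1$ by the previous lemma, it suffices to show that the middle map
\[
\widetilde{m}_j\bigl(R^{\ast}_{n-j,j}\otimes R^{\ast}_{j,k-j}\bigr)\colon (\HH_q^{\otimes n})_r\otimes_h (\HH_q^{\otimes k})_c \To (\HH_q^{\otimes n-j})_r\otimes_h (\HH_q^{\otimes k-j})_c
\]
is completely bounded with cb norm at most some $D(q)$ independent of $n,k,j$.

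First I would decompose this middle map. Using Lemma \ref{Lem=Splitting}, write $R^{\ast}_{n-j,j}$ acting on the $r$-leg and $R^{\ast}_{j,k-j}$ acting on the $c$-leg so that the pairing $\widetilde{m}_j$ contracts the ``$j$'' parts that now sit adjacent to each other in the middle. Concretely, $(\HH_q^{\otimes n})_r\xrightarrow{R^{\ast}_{n-j,j}}(\HH_q^{\otimes n-j})_r\otimes_h(\HH_q^{\otimes j})_r$ and $(\HH_q^{\otimes k})_c\xrightarrow{R^{\ast}_{j,k-j}}(\HH_q^{\otimes j})_c\otimes_h(\HH_q^{\otimes k-j})_c$, both completely bounded with cb norm $\leqslant\sqrt{C(q)}$ because these are (restrictions of) the bounded maps $R_{\bullet,\bullet}^{\ast}$ on the underlying Hilbert spaces and row/column Hilbert spaces are homogeneous, so bounded equals completely bounded; moreover by Proposition \ref{Prop=Haageruptensor}(iii) the Haagerup tensor product of row (resp. column) spaces is again a row (resp. column) space, so the cb norm here is just the Hilbert space norm of $R^{\ast}_{n-j,j}$ (resp. $R^{\ast}_{j,k-j}$), which is $\leqslant\sqrt{C(q)}$. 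After these two maps and reassociating the Haagerup tensor product, we are left with the pairing $\widetilde{m}_j\colon (\HH_q^{\otimes j})_r\otimes_h(\HH_q^{\otimes j})_c\to\C$ applied on the middle two legs. The point is that $\widetilde{m}_j(\bm\xi\otimes\bm\eta)=\langle I^j\bm\xi,\bm\eta\rangle_q$, and by Proposition \ref{Prop=Haageruptensor}(ii), $(\HH_q^{\otimes j})_r\otimes_h(\HH_q^{\otimes j})_c\simeq S^1(\HH_q^{\otimes j})$, under which $\widetilde{m}_j$ becomes (up to the antiunitary $I^j$, which is a complete isometry between a space and its conjugate) the trace functional, whose cb norm on $S^1$ is $1$. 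Hence $\widetilde{m}_j$ is completely contractive, and applying it on inner legs of a Haagerup tensor product is still completely contractive by the ideal/associativity properties of $\otimes_h$.

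Putting the pieces together gives $\|w^j_{n,k}\|_{\op{cb}}\leqslant \|v_{n-j,k-j}\|_{\op{cb}}\cdot\sqrt{C(q)}\cdot\sqrt{C(q)}\cdot 1 \leqslant C(q)$, so one may take $D(q)=C(q)$. The genuinely delicate step is the middle one: one must make sure that when the pairing $\widetilde{m}_j$ is applied to two \emph{interior} tensor legs — with a surviving $r$-leg on the far left and a surviving $c$-leg on the far right — the estimate still goes through in the Haagerup norm rather than just the Hilbert space norm. This is exactly where self-duality of $\otimes_h$ and the identifications in Proposition \ref{Prop=Haageruptensor} do the work: $(\HH_q^{\otimes n-j})_r\otimes_h\bigl((\HH_q^{\otimes j})_r\otimes_h(\HH_q^{\otimes j})_c\bigr)\otimes_h(\HH_q^{\otimes k-j})_c$ and the inner bracket maps completely contractively to $\C$, so the whole thing maps completely contractively to $(\HH_q^{\otimes n-j})_r\otimes_h(\HH_q^{\otimes k-j})_c$; I would spell this out via the functoriality of $\otimes_h$ with respect to complete contractions in each variable. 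Everything else is bookkeeping with the $R^{\ast}$ splitting identities of Lemma \ref{Lem=Splitting} to ensure the legs are arranged so that the adjacent ``$j$''-blocks are the ones being paired.
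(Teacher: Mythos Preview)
Your proof is correct and follows essentially the same route as the paper: factor through $R^{\ast}_{n-j,j}$ and $R^{\ast}_{j,k-j}$ using the row/column identifications of Proposition~\ref{Prop=Haageruptensor}(iii) and homogeneity, identify $\widetilde{m}_j$ with the trace on $S^1$ via $I^j$, and apply the previous lemma for $v_{n-j,k-j}$. Two cosmetic remarks: the reference to Lemma~\ref{Lem=Splitting} is superfluous here (you are not using any three-index splitting), and your use of the bound $\sqrt{C(q)}$ for $R^{\ast}_{\bullet,\bullet}$ between $q$-deformed Hilbert spaces actually gives the slightly sharper constant $D(q)=C(q)$ where the paper states $D(q)=C(q)^2$.
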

\begin{proof}
By Proposition \ref{Prop=Haageruptensor} we know that $\left(\HH_q^{\otimes n-j}\right)_{r} \otimes_{h} \left(\HH_q^{\otimes j}\right)_{r} \simeq \left(\HH_q^{\otimes n-j} \otimes \HH_q^{\otimes j}\right)_{r}$ (analogously for the column spaces). It follows that the maps $R^{\ast}_{n-j,j}\colon \left(\HH_q^{\otimes n}\right)_{r} \to \left(\HH_q^{\otimes n-j}\right)_{r} \otimes_{h} \left(\HH_q^{\otimes j}\right)_{r}$ and $R^{\ast}_{j,k-j}\colon \left(\HH_q^{\otimes k}\right)_{c} \to \left(\HH_q^{\otimes j}\right)_{c} \otimes_{h} \left(\HH_q^{\otimes k- j}\right)_{c} $ are completely bounded with cb norms bounded by $C(q)$. The map $\widetilde{m}_j\colon \left(\HH_q^{\otimes j}\right)_{r} \otimes_{h} \left(\HH_q^{\otimes j}\right)_{c} \to \mathbb{C}$ is the duality pairing \eqref{Dualitypairing2}. The conjugation $I^{j}\colon \HH_q^{\otimes j} \to \CCH_{q}^{\otimes j}$ is a linear isometry, so it also a complete isometry between the corresponding row Hilbert spaces. It follows that the cb norm of $\widetilde{m}_j$ is equal to the cb norm of the inner product, viewed as a map from $\left(\CCH_q^{\otimes j}\right)_{r} \otimes_{h} \left(\HH_q^{\otimes j}\right)_{c}$. Since $\left(\CCH_q^{\otimes j}\right)_{r} \otimes_{h} \left(\HH_q^{\otimes j}\right)_{c}\simeq S^1(\HH_q^{\otimes j})$ and the pairing is then equal to the trace, it is completely contractive. The map $v_{n-j, k-j}$ is completely contractive by the previous lemma, so we get $\|w^{j}_{n,k}\|_{\op{cb}} \leqslant D(q):= C(q)^2$.
\end{proof}
In the next subsection we will show that $\Phi_{n,k}= \sum_{j=0}^{\min(n,k)} \alpha_j w^{j}_{n,k}$ for an appropriate choice of scalars $\alpha_j$.
\subsection{The combinatorial formula}\label{Subsec=Combinatorial}

Before we proceed, it is in order to discuss the main differences between our approach and Avsec's (see \cite[Remark 3.7--Claim 3.19]{1110.4918} for his proof). We employ properties of the operators $P_q^{n}$, defining the $q$-deformed inner products, and especially the related operators $R_{n,k}^{\ast}$ and $R_{n,k,l}^{\ast}$ to harness the combinatorics; a simple case of this procedure can be seen in the formula \ref{Eqn=Wickproduct}, which usually features a sum over a certain type of pair partitions. This replaces the intricate analysis of pair partitions performed by Avsec, which included  introduction of a new crossing number, based on a way in which a given pair partition can be build from smaller pair partitions. We, in turn, use simple algebraic properties of the aforementioned operators to arrive at the formula \eqref{Eqn=Simplified}, in which dependence on the variable $j$ has been drastically diminished. This allows us to conclude by using a very elementary Lemma \ref{Lem=Permutations}. We also completely avoid the use of ultraproduct embeddings and ``colour'' operators (see \cite[Definition 3.11]{1110.4918}). To sum up, our argument uses less sophisticated tools, and the algebraic manipulations involved are fairly elementary and not too involved; the result is a significantly shorter proof.

Let us get back to the study of the maps $w^{j}_{n,k}$. Previously we had to be careful with certain operator space theoretic identifications, because we had to ensure complete boundedness of certain maps. Now our only concern is an algebraic equality, so we will drop most of the decorations. So now $w^{j}_{n,k}$ will be treated as a map from $\HH^{\otimes n+k}$ to $\mathcal{F}_q(\HH)$, where $W(\bm{\xi}) \in L^{1}((\Gamma_q(\RH))^{\op{op}})$ is identified with the corresponding tensor $\bm{\xi}$. Since $w^{j}_{n,k}$ gives as an output a product of two Wick words, we will use the formula \eqref{Eqn=Wickproduct} to make the result of applying $w^{j}_{n,k}$ to a tensor more explicit:
\begin{align*}
w^{j}_{n,k} (\bm{\xi}\otimes \bm{\eta}) &= \sum_{0\leqslant s\leqslant \min(n,k)-j} m_s(R^{\ast}_{n-j-s,s}\otimes \widetilde{m}_j\otimes R^{\ast}_{s,k-j-s})(R^{\ast}_{n-j,j}(\bm{\xi})\otimes R_{j,k-j}^{\ast}(\bm{\eta})).
\end{align*}
Now we can move the $\widetilde{m}_j$ to the left, so that each summand in our formula is of the form
\[
(R^{\ast}_{n-j-s,s}\otimes {\rm Id}_j \otimes{\rm Id}_j\otimes R^{\ast}_{s,k-j-s})(R^{\ast}_{n-j,j}(\bm{\xi})\otimes R_{j,k-j}^{\ast}(\bm{\xi}))
\]
followed by the pairing $\widetilde{m}_j$ and then $m_s$. By Lemma \ref{Lem=Splitting} we can therefore write the end result as
\[
w^{j}_{n,k} (\bm{\xi}\otimes \bm{\eta}) = \sum_{0\leqslant s\leqslant \min(n,k)-j} m_s \widetilde{m}_j (R^{\ast}_{n-j-s,s,j}(\bm{\xi})\otimes R^{\ast}_{j,s,k-j-s}(\bm{\eta})).
\]
This is already nice, but we can make it even nicer by introducing a new variable $p=j+s$. Our next aim is to transform  the two pairings $m_s$ and  $\widetilde{m}_j$ into a single pairing $m_{s+j}$. In order to do that, we recall the formula $\widetilde{m}_j = m_j ({\rm Id} \otimes \sigma_j)$, so we can write
\[
w^{j}_{n,k} (\bm{\xi}\otimes \bm{\eta}) = \sum_{0\leqslant s\leqslant \min(n,k)-j} m_s m_j (R^{\ast}_{n-j-s,s,j}(\bm{\xi})\otimes (\sigma_j \otimes {\rm Id}_{k-j}) R^{\ast}_{j,s,k-j-s}(\bm{\eta})).
\]
Now we want to compare the pairings $m_s m_j$ and $m_{s+j}$. The former is actually computing the inner product in $H_q^{\otimes s}\otimes H_q^{\otimes j}$ and the latter in $H_q^{\otimes s+j}$. We know that the two are related by the operator $R_{s,j}^{\ast}$. We have to be slightly careful, because the pairings involve not only the inner product, but also the complex conjugation $J$, which reverses the order. Taking that into account, we obtain a formula
\[
m_{s+j} = m_s m_j(R^{\ast}_{s,j}\otimes {\rm Id}_{s+j}).
\]
We can now use Lemma \ref{Lem=Splitting} to write $R^{\ast}_{n-j-s,s,j} = ({\rm Id}_{n-p} \otimes R^{\ast}_{s,j}) R^{\ast}_{n-p,p}$. Similarly, we get $R^{\ast}_{j,s,k-j-s} = (R^{\ast}_{j,s} \otimes {\rm Id}_{k-p}) R^{\ast}_{p,k-p}$. Therefore
\begin{equation}\label{Eqn=Simplified}
w^{j}_{n,k} (\bm{\xi}\otimes \bm{\eta})= \sum_{j\leqslant p\leqslant \min(n,k)} m_{p} (R^{\ast}_{n-p,p}(\bm{\xi})\otimes ((\sigma_j\otimes{\rm Id}_{p-j})R^{\ast}_{j,p-j}\otimes {\rm Id}_{k-p})R^{\ast}_{p,k-p}(\bm{\eta})).
\end{equation}
Now the variable $j$ only appears in a few places. Our goal was to obtain a formula ${\rm Id}_{n+k} =\sum_{j} \alpha_j w^{j}_{n,k}$; we can write the right hand side as
\begin{equation}
\sum_{j}\alpha_j w^{j}_{n,k} = \sum_{0\leqslant p\leqslant \min(n,k)} m_{p} ({\rm Id}_n \otimes \left(\sum_{j\leqslant p}\alpha_j(\sigma_j\otimes {\rm Id}_{p-j}) R^{\ast}_{j,p-j}\right)\otimes {\rm Id}_{k-p})(R^{\ast}_{n-p,p}\otimes R^{\ast}_{p,k-p}).
\end{equation}
Note that for different $p$'s we get tensors of different length, so it is necessary and sufficient to check that $\sum_{j\leqslant p} \alpha_j(\sigma_j\otimes {\rm Id}_{p-j}) R^{\ast}_{j,p-j}$ is equal to ${\rm Id}_{n+k}$ for $p=0$ (which is easy, just take $\alpha_0=1$) and equal to $0$ for $p\geqslant 1$. This is what we plan to do.

Recall that the operator $R_{j,p-j}^{\ast}$ is a weighted sum of certain permutations, where the weights are of the form $q^{i(\pi)}$. Let us be more specific. If $A=\{i_1<\dots<i_j\}$ then the permutation is given by $\pi(l)=i_l$ for $l\leqslant j$ and it is increasing for $l>j$. The number of inversions is then $i(\pi) = \sum_{l=1}^{j} (i_l-l)$. If we then apply $\sigma_j\otimes {\rm Id}_{p-j}$, the permutation is $\widetilde{\pi}(l) = i_{j-l+1}$ for $l\leqslant j$ and at stays the same for $l>j$. So we get permutations, whose restrictions to $\{1,\dots,j\}$ are decreasing and restrictions to $\{j+1,\dots,p\}$ are increasing.

It turns out that a permutation of this form can arise only in two ways.
\begin{lem}\label{Lem=Permutations}
Let $S^{j}_{n}$ be the set of permutations of $[n]$ such that the restriction to $\{1,\dots,j\}$ is decreasing and the restriction to $\{j+1,\dots,n\}$ is increasing. Let $\pi \in S^{j}_{n}$. Suppose that $\sigma \in S^{k}_{n}$ (for $k\neq j$) satisfies $\sigma = \pi$. Then $k=j-1$ or $k=j+1$ and only one of these situations occurs.
\end{lem}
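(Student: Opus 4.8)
The plan is to unwind the definitions in purely combinatorial terms and treat a permutation in $S^j_n$ as essentially a choice of a $j$-element set together with an ordering constraint. Concretely, a permutation $\pi \in S^j_n$ is determined by the set $A = \{\pi(1),\dots,\pi(j)\}$: on $\{1,\dots,j\}$ the values are the elements of $A$ listed in \emph{decreasing} order, and on $\{j+1,\dots,n\}$ the values are the elements of $[n]\setminus A$ listed in \emph{increasing} order. So $S^j_n$ is in bijection with the $j$-element subsets of $[n]$, and the statement to prove is: if $A$ is a $j$-element subset and $B$ is a $k$-element subset with $k\neq j$ producing the same permutation $\pi$, then $|j-k|=1$ and moreover $B$ is \emph{uniquely} determined (it cannot be realised both as a $(j-1)$-set and a $(j+1)$-set, and within each size class there is at most one such $B$).

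The key structural observation I would isolate first: the sequence of values $(\pi(1),\dots,\pi(n))$ of a permutation in $S^j_n$ consists of a strictly decreasing ``block'' of length $j$ followed by a strictly increasing block of length $n-j$. Call $j$ the \emph{break index}. The question is for how many indices $m$ the sequence $(\pi(1),\dots,\pi(n))$ decomposes as (strictly decreasing of length $m$) followed by (strictly increasing of length $n-m$). I would argue: if $\pi(1) > \pi(2) > \cdots > \pi(j)$ and $\pi(j) < \pi(j+1) < \cdots$, then the only candidate break indices other than $j$ are $j-1$ and $j+1$. Indeed, moving the break left to $j-1$ requires $\pi(j-1) < \pi(j)$, which contradicts the decreasing block unless we are at the very boundary — here I would do the short case analysis: a break index $m < j$ forces $\pi(m) < \pi(m+1)$, but within the decreasing block $\pi(m) > \pi(m+1)$ whenever $m+1 \le j$; the only way to avoid contradiction is $m = j-1$ combined with $\pi(j-1) < \pi(j)$ being vacuous... no — rather, the honest statement is that $m=j-1$ works only if additionally the element $\pi(j)$ is larger than $\pi(j-1)$, i.e. the ``decreasing'' block actually has a coincidental local structure. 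I would phrase it cleanly: $m$ is a valid break index iff $\pi(m) < \pi(m+1)$ fails to be forced, and a direct inspection shows at most one of $j-1, j$, $j+1$ can be a second valid break index besides $j$ itself, by comparing the three relevant adjacent pairs $\pi(j-1)$ vs $\pi(j)$ and $\pi(j)$ vs $\pi(j+1)$ and $\pi(j+1)$ vs $\pi(j+2)$.

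The cleanest route, which I would actually write, is this. Given $\pi \in S^j_n$, let $D = \{m : \pi \in S^m_n\}$ be the set of all break indices realising $\pi$; we always have $j \in D$. I claim $D$ is an interval of integers of length at most $2$ containing $j$. That $D$ is an interval: if $m, m' \in D$ with $m < m'$ then the value sequence is decreasing on $[1,m']$ (since $m' \in D$) and increasing on $[m, n]$ (since $m \in D$), so it is \emph{constant}... no, it is both decreasing and increasing on the overlap $[m, m']$, hence that overlap has length $\le 1$, giving $m' \le m+1$. Thus $D \subseteq \{m, m+1\}$ has at most two elements, so $D = \{j\}$ or $D = \{j-1, j\}$ or $D = \{j, j+1\}$. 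In all cases the only $k \ne j$ in $D$ satisfies $|k - j| = 1$, and there is exactly one such $k$ (never both $j-1$ and $j+1$, since $D$ has at most $2$ elements). This is exactly the assertion, and $\sigma \in S^k_n$ with $\sigma = \pi$ means precisely $k \in D$.

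I do not expect a serious obstacle; the statement is elementary once reformulated via break indices, and the only thing to get right is the overlap argument showing $D$ is an interval of length $\le 2$ — that is where the ``$|j-k|=1$ and only one'' conclusion comes from, so I would make sure that step is written carefully, noting that a sequence that is simultaneously strictly decreasing and strictly increasing on a set of indices can have at most one index in that set.
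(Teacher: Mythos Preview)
Your approach is correct and, once the write-up is tightened, arguably cleaner than the paper's. One small slip: with $m\in D$ the increasing block is $\{m+1,\dots,n\}$, not $\{m,\dots,n\}$; correspondingly the overlap with the decreasing block $\{1,\dots,m'\}$ is $\{m+1,\dots,m'\}$, which has $m'-m$ elements, and requiring this to be at most $1$ gives $m'\leqslant m+1$ as you claim. You flagged this step as the one needing care, so just make sure the indices are right when you write it.

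The paper argues differently. It first rules out $|k-j|\geqslant 2$ by the same monotonicity contradiction you use, but for the ``only one'' clause it does an explicit case split on whether the minimum $i_1$ of the set $A=\{\pi(1),\dots,\pi(j)\}$ equals $1$: if $i_1=1$ then $\pi(j)=1$ forces $\pi(j)<\pi(j+1)$, so $k=j+1$ is impossible while $k=j-1$ works; if $i_1>1$ then $\pi(j)>1$ and $\pi(j+1)=1$, so $k=j-1$ is impossible while $k=j+1$ works. Your ``set of break indices $D$ is an interval of length $\leqslant 2$'' argument subsumes both steps at once and is more conceptual. On the other hand, the paper's case analysis yields slightly more than the lemma as stated: it shows that a second break index \emph{always exists} and identifies it explicitly as $A\setminus\{1\}$ or $A\cup\{1\}$. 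That explicit description is what the paper then uses to compute the difference $i(A')-i(A)$ and solve for the coefficients $\alpha_j$. Your argument proves the lemma exactly as phrased (at most one other break index), but if you intend to continue to the coefficient computation you will still need the concrete identification of the second index, which amounts to the same $i_1=1$ versus $i_1>1$ dichotomy.
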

\begin{proof}
Suppose that $\sigma \in S^{k}_{n}$ for $k\geqslant j+2$. Then $\sigma$ restricted to $\{j+1,\dots, n\}$ is not increasing. We argue similarly for $k \leqslant j-2$.

Now let $\pi$ be given by a subset $A=\{i_1<\dots<i_j\}$. If $i_1=1$ then there cannot be a $\sigma \in S_{n}^{j+1}$ such that $\sigma=\pi$. Indeed, we have $\sigma(j)=1$, so $\sigma(j+1)>\sigma(j)$, even though $\sigma$ was supposed to decrease on $\{1,\dots,j+1\}$. On the other hand, there is a permutation $\sigma \in S^{j-1}_{n}$ such that $\sigma=\pi$; simply let $\sigma(j)=1$ and it will be increasing on $\{j,\dots,n\}$. In case $i_1>1$ there is a unique permutation $\sigma \in S^{j+1}_{n}$ such that $\sigma=\pi$; the proof is similar.
\end{proof}
What remains to be done is the appropriate choice of coefficients $\alpha_j$. We need cancellations, so the signs must alternate and we will work with powers of $q$, so it will be easier to work with a coefficient $\beta_j$ such that $\alpha_j = (-1)^{j} q^{\beta_j}$. Suppose that we have a set $A=\{i_1<\dots<i_j\}$ with $i(A) = \sum_{l=1}^{j} (i_l-l)$. In case $i_1=1$ the corresponding permutation arises also from the subset $A':=\{i_2<\dots<i_j\}$ with $i(A') = \sum_{l=2}^{j} (i_l - (l-1)) = \sum_{l=1}^{j} (i_l-l) + (j-1)=i(A)+j-1$. If we want the cancellation, it follows that $\beta_{j-1} + (j-1) = \beta_j$. Since we know that $\beta_0=0$, it follows that $\beta_j = {j\choose 2}$. We just have to check that these coefficients also work in the case $1 \notin A$. Then the set $\widetilde{A}:= \{1<i_1<\dots< i_j\}$ yields the same permutation and $i(\widetilde{A}) = \sum_{l=1}^{j} (i_l - (l+1)) = i(A) - j$. So the compatibility condition in this case is $\beta_{j+1} - j = \beta_{j}$, i.e. $\beta_{j+1} = \beta_j+j$, which is satisfied by our choice. Thus we have proved the following proposition.
\begin{prop}\label{Prop=Combinatorial}
We have $\Phi_{n,k} = \sum_{j=0}^{\min(n,k)} (-1)^{j} q^{{j \choose 2}} w^{j}_{n,k}$.
\end{prop}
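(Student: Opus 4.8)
The plan is to verify the identity $\Phi_{n,k} = \sum_{j=0}^{\min(n,k)} (-1)^j q^{\binom{j}{2}} w^j_{n,k}$ purely at the algebraic level, identifying each side with a map $\HH^{\otimes n+k} \to \mathcal{F}_q(\HH)$. All the serious work has already been front-loaded into formula \eqref{Eqn=Simplified} and Lemma \ref{Lem=Permutations}; what remains is to assemble these.

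First I would compute the left-hand side. By definition $\Phi_{n,k}(\bm{\xi}\otimes_h\bm{\eta}) = W(R_{n,k}(\bm{\xi}\otimes_h\bm{\eta}))$, and under the identification of $W(\bm{\zeta})$ with $\bm{\zeta}$ this is just $R_{n,k}(\bm{\xi}\otimes_h\bm{\eta})$, i.e. the plain tensor $\bm{\xi}\otimes\bm{\eta} \in \HH^{\otimes n+k}$, which is exactly ${\rm Id}_{n+k}(\bm{\xi}\otimes\bm{\eta})$. So the claim reduces to showing ${\rm Id}_{n+k} = \sum_j \alpha_j w^j_{n,k}$ with $\alpha_j = (-1)^j q^{\binom{j}{2}}$. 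Next I would plug in \eqref{Eqn=Simplified} and collect terms by the value of $p$, as in the displayed equation following \eqref{Eqn=Simplified}: since the summands for distinct $p$ land in tensor spaces of different length (the $m_p$ pairing contracts $2p$ legs), they are linearly independent, and the identity holds if and only if the inner operator $\sum_{j\leqslant p}\alpha_j(\sigma_j\otimes{\rm Id}_{p-j})R^{\ast}_{j,p-j}$ equals ${\rm Id}$ when $p=0$ and $0$ when $p\geqslant 1$. The case $p=0$ forces (and is satisfied by) $\alpha_0 = 1$.

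The heart of the matter is the vanishing for $p\geqslant 1$. Here I would expand $R^{\ast}_{j,p-j}$ as a weighted sum of permutations $q^{i(A)}\bm{v}_A\otimes\bm{v}_{[p]\setminus A}$ over subsets $A\subset[p]$ with $|A|=j$, and note that pre-composing with $\sigma_j\otimes{\rm Id}_{p-j}$ reverses $A$, producing exactly the permutations in the set $S^j_p$ of Lemma \ref{Lem=Permutations} (decreasing on $\{1,\dots,j\}$, increasing on the rest). The crucial structural fact is that $\bigcup_{j=0}^p S^j_p$ lists each such permutation with multiplicity exactly two, once as an element of $S^j_p$ and once as an element of $S^{j\pm1}_p$ — this is precisely Lemma \ref{Lem=Permutations}. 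So the whole sum $\sum_{j\leqslant p}\alpha_j(\sigma_j\otimes{\rm Id}_{p-j})R^{\ast}_{j,p-j}$, viewed as a linear combination of permutation operators, vanishes if and only if for every such permutation the two coefficients it picks up cancel. Tracking the number of inversions: if $A=\{1=i_1<i_2<\dots<i_j\}$ (so $1\in A$), deleting $1$ gives $A'$ of size $j-1$ realising the same permutation with $i(A') = i(A)+(j-1)$; the cancellation condition $\alpha_j q^{i(A)} + \alpha_{j-1}q^{i(A)+j-1} = 0$ becomes, writing $\alpha_j = (-1)^j q^{\beta_j}$, the recursion $\beta_{j-1}+(j-1) = \beta_j$. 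With $\beta_0 = 0$ this gives $\beta_j = \binom{j}{2}$. Symmetrically, if $1\notin A$, prepending $1$ gives $\widetilde A$ of size $j+1$ with $i(\widetilde A) = i(A) - j$, and the condition $\beta_{j+1}-j = \beta_j$ is the same recursion, hence automatically satisfied. Since every permutation in the union falls into exactly one of these two cases (again by Lemma \ref{Lem=Permutations}, and the boundary cases $j=0$, $j=p$ contribute only one term each but are consistent), all coefficients cancel and the sum is $0$ for $p\geqslant 1$.

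I expect the main obstacle to be bookkeeping rather than conceptual: getting the sign and $q$-power conventions to line up consistently through the reversal $\sigma_j$ and through the two pairings $m_s$, $\widetilde m_j$ being fused into $m_p$ (the identities $\widetilde m_j = m_j({\rm Id}\otimes\sigma_j)$ and $m_{s+j} = m_s m_j(R^\ast_{s,j}\otimes{\rm Id})$ must be applied in the right order), and making sure the boundary values $j=0$ and $j=p$ in Lemma \ref{Lem=Permutations} don't spoil the pairwise cancellation. But since formula \eqref{Eqn=Simplified} already carries out the fusion, and Lemma \ref{Lem=Permutations} already establishes the two-to-one correspondence, the remaining argument is the short inversion-counting computation sketched above, which pins down $\beta_j = \binom{j}{2}$ and verifies it is consistent with both the $1\in A$ and $1\notin A$ relations.
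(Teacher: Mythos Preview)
Your proposal is correct and follows the paper's approach essentially verbatim: the paper's proof of this proposition \emph{is} the discussion preceding its statement, and you have accurately recapitulated that argument (reduce to ${\rm Id}_{n+k}=\sum_j\alpha_j w^j_{n,k}$, collect by $p$ via \eqref{Eqn=Simplified}, use Lemma~\ref{Lem=Permutations} for the two-to-one matching, and solve the recursion $\beta_j=\beta_{j-1}+(j-1)$ to get $\beta_j=\binom{j}{2}$). Your parenthetical about the boundary cases $j=0$ and $j=p$ is slightly imprecise in wording---each of $S^0_p$ and $S^p_p$ contains a single permutation, but these are genuinely matched (with elements of $S^1_p$ and $S^{p-1}_p$ respectively), so the pairwise cancellation covers them too---but this does not affect correctness.
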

\begin{proof}[Proof of Theorem \ref{Thm=Polybound}]
We want to find a bound for the cb norm of the projection $P_n$ onto Wick words of length $n$. The Khintchine inequality \eqref{Eqn=Khintchine} shows that this subspace is completely isomorphic (with distortion linear in $n$) to a certain well behaved operator space; a direct sum of Haagerup tensor products of row and column Hilbert spaces. A modification of the predual of this map is $\Phi_n$, which is defined using $\Phi_{n,k}$ (see Remark \ref{Rem=reduction}); it suffices to prove that $\|\Phi_{n,k}\|_{\op{cb}}$ is bounded by a constant depending only on $q$. By the previous proposition we have 
\[
\|\Phi_{n,k}\|_{\op{cb}} \leqslant \sum_{j=0}^{\min(n,k)} |q|^{{j\choose 2}} \|w^{j}_{n,k}\|_{\op{cb}} \leqslant \underbrace{\left(\sum_{j=0}^{\infty} |q|^{{j\choose 2}}\right)}_{=C'(q)} D(q),
\]
where the bound for $\|w^{j}_{n,k}\|_{\op{cb}}$ comes from Lemma \ref{Lem=ShortWick}.
\end{proof}
\bibliographystyle{alpha}
\bibliography{Research}

\begin{thebibliography}{BKS97}

\bibitem[Avs11]{1110.4918}
Stephen Avsec.
\newblock {S}trong {S}olidity of the $q$-{G}aussian {A}lgebras for all $-1 < q
  < 1$, 2011.
\newblock arXiv:1110.4918.

\bibitem[BKS97]{MR1463036}
Marek Bo{\.z}ejko, Burkhard K{\"u}mmerer, and Roland Speicher.
\newblock {$q$}-{G}aussian processes: non-commutative and classical aspects.
\newblock {\em Comm. Math. Phys.}, 185(1):129--154, 1997.

\bibitem[Bo{\.{z}}98]{MR1649711}
Marek Bo{\.{z}}ejko.
\newblock Completely positive maps on {C}oxeter groups and the
  ultracontractivity of the {$q$}-{O}rnstein-{U}hlenbeck semigroup.
\newblock In {\em Quantum probability ({G}da\'{n}sk, 1997)}, volume~43 of {\em
  Banach Center Publ.}, pages 87--93. Polish Acad. Sci. Inst. Math., Warsaw,
  1998.

\bibitem[Bo{\.{z}}99]{MR1811255}
Marek Bo{\.{z}}ejko.
\newblock Ultracontractivity and strong {S}obolev inequality for
  {$q$}-{O}rnstein-{U}hlenbeck semigroup {$(-1<q<1)$}.
\newblock {\em Infin. Dimens. Anal. Quantum Probab. Relat. Top.},
  2(2):203--220, 1999.

\bibitem[BS91]{MR1105428}
Marek Bo{\.z}ejko and Roland Speicher.
\newblock An example of a generalized {B}rownian motion.
\newblock {\em Comm. Math. Phys.}, 137(3):519--531, 1991.

\bibitem[EP03]{MR1994546}
Edward~G. Effros and Mihai Popa.
\newblock Feynman diagrams and {W}ick products associated with {$q$}-{F}ock
  space.
\newblock {\em Proc. Natl. Acad. Sci. USA}, 100(15):8629--8633, 2003.

\bibitem[ER00]{MR1793753}
Edward~G. Effros and Zhong-Jin Ruan.
\newblock {\em Operator spaces}, volume~23 of {\em London Mathematical Society
  Monographs. New Series}.
\newblock The Clarendon Press, Oxford University Press, New York, 2000.

\bibitem[Nou04]{MR2091676}
Alexandre Nou.
\newblock Non injectivity of the {$q$}-deformed von {N}eumann algebra.
\newblock {\em Math. Ann.}, 330(1):17--38, 2004.

\bibitem[OP10]{MR2680430}
Narutaka Ozawa and Sorin Popa.
\newblock On a class of {${\rm II}\sb 1$} factors with at most one {C}artan
  subalgebra.
\newblock {\em Ann. of Math. (2)}, 172(1):713--749, 2010.

\bibitem[Pis03]{MR2006539}
Gilles Pisier.
\newblock {\em Introduction to operator space theory}, volume 294 of {\em
  London Mathematical Society Lecture Note Series}.
\newblock Cambridge University Press, Cambridge, 2003.

\bibitem[Ric05]{MR2164947}
{\'E}ric Ricard.
\newblock Factoriality of {$q$}-{G}aussian von {N}eumann algebras.
\newblock {\em Comm. Math. Phys.}, 257(3):659--665, 2005.

\bibitem[Was17]{MR3717957}
Mateusz Wasilewski.
\newblock {$q$}-{A}raki-{W}oods algebras: extension of second quantisation and
  {H}aagerup approximation property.
\newblock {\em Proc. Amer. Math. Soc.}, 145(12):5287--5298, 2017.

\end{thebibliography}

\end{document}